\numberwithin{equation}{section}
\newcommand{\ben}{\begin{enumerate}}
\newcommand{\een}{\end{enumerate}}
\newcommand{\bea}{\begin{eqnarray}}
\newcommand{\ba}{\begin{array}}
\newcommand{\bean}{\begin{eqnarray*}}
\newcommand{\ea}{\end{array}}
\newcommand{\eea}{\end{eqnarray}}
\newcommand{\eean}{\end{eqnarray*}}
\newcommand{\beq}{\begin{equation}}
\newcommand{\eeq}{\end{equation}}
\newcommand{\bthm}{\begin{thm}}
\newcommand{\ethm}{\end{thm}}
\newcommand{\blem}{\begin{lem}}
\newcommand{\elem}{\end{lem}}
\newcommand{\bprop}{\begin{prop}}
\newcommand{\eprop}{\end{prop}}
\newcommand{\bcor}{\begin{cor}}
\newcommand{\ecor}{\end{cor}}
\newcommand{\bdfn}{\begin{dfn}}
\newcommand{\edfn}{\end{dfn}}
\newcommand{\brem}{\begin{rem}}
\newcommand{\erem}{\end{rem}}
\newcommand{\bpf}{\begin{proof}}
\newcommand{\epf}{\end{proof}}
\newcommand{\bfact}{\begin{fact}}
\newcommand{\efact}{\end{fact}}
\newcommand{\bobs}{\begin{obs}}
\newcommand{\eobs}{\end{obs}}
\newcommand{\bexam}{\begin{exam}}
\newcommand{\eexam}{\end{exam}}
\newtheorem{thm}{Theorem}[section]
\newtheorem{prop}[thm]{Proposition}
\newtheorem{lem}[thm]{Lemma}
\newtheorem{cor}[thm]{Corollary}
\newtheorem{dfn}[thm]{Definition}
\newtheorem{rem}[thm]{Remark}
\newtheorem{fact}[thm]{Fact}
\newtheorem{claim}[thm]{Claim}
\newtheorem{obs}[thm]{Observation}
\newtheorem{exam}[thm]{Example}
 \newtheoremstyle{claimstyle}%
   {}
   {}
   {\normalfont}
   {}
   {\itshape}
   {.}
   { }
   {\thmnote{#3}}
\theoremstyle{claimstyle}
\def\cA{\mathcal A}                    \def\cC{\mathcal C}
             \def\cF{\mathcal F}       
\def\cL{{\mathcal L}}           \def\cM{\mathcal M}        \def\cP{{\mathcal P}}
             \def\cV{\mathcal V}       \def\cJ{\mathcal J}   \newcommand{\J}{\mathcal{J}}
\def\cS{\mathcal S}             \def\cE{\mathcal E}
           \def\cK{\mathcal K}
\def\N{{\mathbb N}}                      
\def\C{{\mathbb C}}                      \def\oc{{\hat \C}}
\newcommand{\cbar}{\hat{{\mathbb C}} }
                             \def\d{\delta}
\def\g{\gamma}                           \def\l{\lambda} \def\la{\lambda}
               \def\sg{\sigma}
\newcommand{\ep}{\varepsilon}
\newcommand{\ph}{\varphi}
\newcommand{\al}{\alpha}
\newcommand{\ga}{\gamma}
\def\1{1\!\!1}
\def\and{\text{ and }}
\def\F{{\mathcal F}}
     \def\HD{\text{{\rm HD}}}   
         \def\PD{\text{\rm PD}}
\def\({\bigl(}                \def\){\bigr)}
                        \def\^{\tilde}
\def\D{{\mathbb D}}
\newcommand{\pfl}{\cL _{{\l },t}}
\newcommand{\pfj}{\cL _{\sg^j({\l }),t}}
\newcommand{\pfjj}{\cL _{\sg^{j+1}({\l }),t}}
\def\shift{\sigma}
\def\lpt{{\underline P}_{\l}(t)}
\def\upt{\overline P_\l(t)}
\def\lp{{\underline P}_{\l}}
\def\up{\overline P_\l}
\def\set{\Lambda}
\def\basept{\eta}
\def\hbasept{\underline h_\basept}
\begin{document}

\title[Regularity (and irregularity) of fiber dimensions of RDS]
{ \bf\large Regularity and irregularity of fiber dimensions of non-autonomous dynamical systems}
\date{\today}

\author[\sc Volker MAYER]{\sc Volker MAYER}
\address{Volker Mayer, Universit\'e de Lille I, UFR de Math\'ematiques,
UMR 8524 du CNRS,
59655 Villeneuve d'Ascq Cedex, France}
\email{volker.mayer@math.univ-lille1.fr\newline \hspace*{0.3cm} Web:
math.univ-lille1.fr/$\sim$mayer}

\author{Bart{\l}omiej Skorulski} \address{Departamento de
  Matem{\'a}ticas, Universidad Cat{\'o}lica del Norte, Avenida Angamos
  0610, Antofagasta, Chile} \email{bskorulski@ucn.cl}

\author{Mariusz Urba\'nski}
\address{Department of
  Mathematics, University of North Texas, Denton, TX 76203-1430, USA}
\email{urbanski@unt.edu \newline \hspace*{0.42cm} \it Web: \rm
  www.math.unt.edu/$\sim$urbanski}

%
%
\keywords{ Holomorphic dynamics, Holomorphic Motions, Meromorphic
functions} \subjclass{Primary: 30D05; Secondary:}

\begin{abstract}
This note concerns non-autonomous dynamics of rational functions and, more
precisely, the fractal behavior of the Julia sets under perturbation of non-autonomous 
systems. We provide a necessary and sufficient condition for holomorphic stability
which leads to H\"older continuity of dimensions of hyperbolic non-autonomous Julia sets
with respect to the $l^\infty$-topology on the parameter space.
On the other hand we show that, for some particular family, the Hausdorff and packing
dimension functions are not differentiable at any point and that these dimensions 
are not equal on an open dense set of the parameter space still with respect to the $l^\infty$-topology.
\end{abstract}

\thanks{The research of the third named author was supported in part by the NSF Grant DMS 1001874.}

\maketitle

\section{Introduction}

Let $\cF=\big\{ f_{\tau} \,;\; \tau\in \set_0 \big\}$ be a holomorphic family of rational functions depending 
analytically on a parameter $\tau\in \set_0 $, $\set_0$ being some open and connected
 subset of $\C^{d}$, $d \geq 2$.
We investigate the dynamics of functions
$$f_{\l_n}\circ f_{\l_{n-1}}\circ ...\circ f_{\l_1} \;\; , \quad n\geq 1\, ,$$
where each $f_{\l_j}$ is an arbitrarily chosen function of the family $\cF$.
Such a dynamical system is usually called \emph{non-autonomous}. They generalize
\emph{deterministic dynamics} (where all the functions
$f_{\l_j}$ equal one fixed rational map) and \emph{random dynamics} (where the functions
$f_{\l_j}$ are chosen according to some probability law) that first have been
considered by Fornaess and Sibony \cite{FS91}.
If $\l=(\l_1,\l_2,...)\in \set_0^\N$ then it is convenient to denote
 $$f_\l^n=f_{\l_n}\circ f_{\l_{n-1}}\circ ...\circ f_{\l_1}\,.$$
Like in deterministic dynamics, the normal family behavior of $(f_\l^n)_n$
splits the sphere into two subsets. The Fatou set $\F_\l$, i.e. the set of points 
for which $(f_\l^n)_n$ is normal on some neighborhood, and its complement the Julia set $\J_\l$.
We are going to investigate the fractal nature of the Julia set $\J_\l$ and, more precisely, the dependence 
of the fractal dimensions of  $\J_\l$ on the parameter $\l\in \set_0^\N$.

The deterministic hyperbolic case is completely understood by now. 
Indeed in 1979, R. Bowen \cite{Bow79} showed that the Hausdorff dimension of the Julia set 
can be expressed by the zero of a pressure function. 
The picture was completed by D. Ruelle \cite{Ru82} who showed that this dimension depends real
analytically on the function. More recently, random dynamics became an active area
and both Bowen's formula and Ruelle's real analyticity result have its counterparts
in random dynamics. Bowen's formula has been established for 
various random dynamical systems (see e.g. \cite{msu11} and the corresponding
references in this monograph) and H. Rugh \cite{Rug05} established 
real analyticity for random repellers. We will see in this note that the situation is completely 
different in the non-autonomous setting. 

\

Bowen's and Ruelle's results are valid for hyperbolic deterministic functions
and hyperbolic functions are so called \emph{stable} functions of the parameter space.
In general, it is not possible to expect nice behavior of the Julia sets and of the dimensions of
these sets  if we perturb an unstable map. Therefore, we first investigate and characterize
stability of non-autonomous maps. 

There are several notions of stability. We consider \emph{holomorphic stability}
that is based on the concept of holomorphic motions and the $\l$--Lemma, which has its origin in
the fundamental paper \cite{MSS83} by Man\'e, Sad and Sullivan.
A parameter $\basept\in \set_0^\N$ is called \emph{ holomorphically stable}
if there exists a family of
holomorphic motions $\{h_{\shift^n(\l)}\}_n$ over some neighborhood $V_{\basept}$ 
such that the following diagram commutes. In here, $\sg(\l_1,\l_2,...)=(\l_2,\l_3,...)$ is the usual shift map.
\beq \label{diag 1}
\xymatrix{
\J_{\basept} \ar[d]_{h_\l} \ar[r]^{f_{\basept_{1}}} & \J_{\shift(\basept)} \ar[d]_{h_{\shift(\l)}}\ar[r]^{f_{\basept_{2}}} &
 \J_{\shift^2(\basept)} \ar[r]^{f_{\basept_{3}}} \ar[d]_{h_{\shift^2(\l)}} & \J_{\shift^3(\basept)}  \ar[d]_{h_{\shift^3(\l)}} \;\; ...\\
\J_{\l} \ar[r]^{f_{\l_{1}}} & \J_{\shift(\l)} \ar[r]^{f_{\l_{2}}} & \J_{\shift^2(\l)} \ar[r]^{f_{\l_{3}}} & \J_{\shift^3(\l)}   \;\;...
}\eeq
Comerford in \cite{Co08} proved stability for certain hyperbolic non-autonomous polynomial maps.
We establish the following characterization of holomorphic stability. It is valid under natural dynamical conditions
(Julia sets are perfect and the maps are topologically exact; see Definition \ref{topo ex}) which
are necessary in order to exclude some pathological examples.
We would like to mention that the usual theory developed by Man\'e, Sad and Sullivan \cite{MSS83}
is based on the stability of repelling periodic points. Such points do not exists at all in the non
autonomous setting.
Another remark is that the parameter space 
$\set_0^\N$ is infinite dimensional.

\bthm\label{holo stab} Suppose that  $\Lambda\subset \set_0^\N$ is equipped with a 
complex Banach manifold structure.
 Let $f_\basept$, $\basept\in \set$, have perfect Julia sets and suppose that
  $f_\l$ is topologically exact for $\l$ in a neighborhood of $\basept$.
Then, the map $f_\basept$ is holomorphically stable if and only if  
 there exist an open neighborhood $V$ of $\basept$
  and three holomorphic functions $\al _i^n:V\to \cbar$, $i=1,2,3$,
such that 
\beq \label{cond 1.2}
\al _i^n (\l )\in \cJ_{\sg^n(\l)}
 \quad \text{ and } \quad \al _i^n (\l )\neq  \al _j^n (\l)\quad \text{for all $\l \in V$ and $i\neq j$.}
\eeq
\beq \label{cond 1.3}
 f_\l ^{n}\big( \cC_{f_\l ^{n}}\big)\cap \{\al _1^n (\l ),\al _2^n (\l ),\al _3^n (\l )\}=\emptyset
  \quad \text{for all $\l\in V$ and
$n\geq 1$.}
\eeq
\beq \label{cond 1.4}
\text{If }\; \al _i^{n+k} (\l )= f_{\sg^n(\la)}^k (\al _j^n (\l )) \; \text{for some $\l\in V$ then this equality holds 
for all $\l\in V$.}
\eeq
\ethm

\brem \label{rem general}
Throughout the whole scope of this paper  we could have chosen
in each fiber $j\geq 0$ the map $f_{\l_j}$ in a different family $\cF_j$ of rational maps.
In particular, Theorem \ref{holo stab} and the whole Section \ref{sec 3} on holomorphic stability does hold
without any restrictions on these families $\cF_j$, $j\geq 0$. Only starting from 
Section \ref{sec 4} we need some further control like, for example, a uniform bound
on the degree of the functions.
We do not insist for such a generalization simply because the notations are already involved enough.
\erem

This characterization is in the spirit 
of the stability of critical orbits
in the deterministic case, i.e. the stability of orbits
$$ c_\l \mapsto f_\l (c_\l) \mapsto ...\mapsto f_\l^n(c_\l)\mapsto ... $$
where $c_\l$ is a critical point of $f_\l$. By Montel's Theorem,
such an orbit is stable if it avoids three values $\al _1^n (\l ),\al _2^n (\l ),\al _3^n (\l )$
depending holomorphically on $\l$ and staying some definite spherical distance apart.
Such a condition appears in Lyubich's paper \cite{Lyu86}
which itself is based on the previous work by Levin \cite{Le81}. It turns out that this is the 
right point of view for generalizing the characterization of stability to the non-autonomous setting.

\

Hyperbolic random and non-autonomous 
polynomials have been studied for example by Comerford \cite{Co06} and
 Sester \cite{Ses99}. Sumi  considered in \cite{Si97} hyperbolic semi-groups.
  The definition of hyperbolicity is based on a uniform expanding
property, and this is the reason why we will call such maps \emph{uniformly hyperbolic}.
We will consider hyperbolic and uniformly hyperbolic non-autonomous maps. 
Later in the course of the paper we will see that they have normal critical orbits and are therefore 
holomorphically stable provided we equip the
parameter space with the $l^\infty$-topology. 
Using
standard properties of quasiconformal mappings we get the following H\"older continuity result
of the dimensions.

\bthm\label{cor holder}
For every uniformly hyperbolic map $f_\basept$ there is a neighborhood $V$ of $\basept$ in 
$l^\infty(\Lambda_0)$  such that the functions 
$$\l\mapsto \HD (\J_\l )\quad \text{and } \quad \l\mapsto \PD (\J_\l )$$
(in fact all fractal dimensions) are H\"older continuous on $V$
with H\"older exponent $\al(\l ) \to 1$ if $\l$ converges to the base point $\basept$.
\ethm

As already mentioned before, in deterministic as well as in random dynamics one has much more, 
namely, real analytic dependence of the dimension
\cite{Ru82, Rug05}.  Surprisingly it turned out that in the non-autonomous setting the H\"older continuity 
obtained in Theorem \ref{cor holder} is best possible. Indeed we show the following.

\bthm\label{thm irregularity}
Consider the quadratic family
$$
\cF = \Big\{ f_\tau (z)= \tau /2(z^2-1)+1\; , \;\; \tau\in \set _0\Big\} \;\; \text{ where }\;\; \set _0 =\{ |\tau |> 40\}
$$
and let $\set$ be the interior of $\set _0^\N\cap l^\infty (\set _0 )$ for the $l^\infty$--topology.
Then $\set= \set ^{uHyp}$ (see Definition \ref{uniform hyp} ) and the functions 
$$\l\mapsto \HD (\J_\l )\quad \text{and } \quad \l\mapsto \PD (\J_\l )$$
are not differentiable at any point $\basept\in \set$ when equipped with the $l^\infty$-topology.
\ethm

In order to prove this result we first produce conformal measures, introduce and study
fiber pressures and establish an appropriate version of Bowen's formula. 
Considering the family $\cF$ in greater detail we also show that generically the different 
fractal dimensions are not identical.

\bthm\label{gap dimension}
Let $\cF$ and $\set$ be like in Theorem \ref{thm irregularity}. Then, there exists an open 
and dense  set $\Omega \subset \set$ such that
$$\HD ( \J_\l ) < \PD (\J_\l ) \quad \text{for every }\;\; \l \in \Omega \,.$$
\ethm


\pagebreak

\section{Non-autonomous dynamics}

Rational functions are holomorphic endomorphisms of the Riemann sphere $\cbar$
and the spherical geometry is the natural setting to work with. Therefore, all
distances, disks and derivatives will be understood with respect to the spherical metric.

We always assume that $\set_0$ is an open and connected
subset of $\C^d$ for some $d\geq 2$
and that $\cF =\big\{f_\tau\,; \; \tau \in \set_0\big\}$ is a holomorphic family of rational functions
which means that $f_\tau$ is a rational function for every $\tau \in \set_0$ and that $(\tau , z)\mapsto f_\tau (z)$
is a holomorphic map from $\set_0\times \cbar $ to $\cbar$.
We are interested in the dynamics of $$ f_{\l _n}\circ ...\circ f_{\l _2}\circ f_{\l _1} \;\; , \quad n\geq 1\,$$
where the $f_{\l_j}\in \cF$ or, equivalently, the $\l_j\in \set_0$  are arbitrarily chosen.

 Let $ \pi :\set_0^\N\to \set_0$ be the canonical projection on the first coordinate
and let $ \shift :\set_0^\N \to \set_0^\N$ be the shift map $\shift (\l_1,\l_2,...)= (\l_2,\l_3,...)$.
To $\l =(\l_1,\l_2,...)\in \set$ we associate a non-autonomous dynamical system
by first identifying $f_\l$ with $f_{\pi (\l )}=f_{\l_1}$ and then by setting 
$$f_\l ^{n} = f_{\shift ^{n-1} (\l )}\circ ...\circ f_{\shift  (\l )}\circ f_{\l }
:=f_{\l _n}\circ ...\circ f_{\l _2}\circ f_{\l _1} \;\; , \quad n\geq1\, .$$
A straightforward generalization of the deterministic case leads to the following definitions.
The \emph{Fatou set} of $(f_\l^n)_n$ is
$$\F (f_\l)=\left\{ z\in\cbar \; ; \;\;   (f_\l^n)_n \text{ is a normal family near } z \right\}$$
and the \emph{Julia set} $\J(f_\l)=\cbar \setminus \F(f_\l)$. 
Most often there will be only one non-autonomous map $f_\l$ 
associated to the parameter $\l$. Then we will use the simpler notations
$\cF_\l$ and  $\cJ_\l$.
For these sets we have the 
invariance property
\beq \label{invariance}
 f_{\l_j}^{-1} (\J_{\shift ^{j+1}(\l)})=\J_{\shift ^{j}(\l)} \text{ and } f_{\l_j}^{-1} (\F_{\shift ^{j+1}(\l)})=\F_{\shift ^{j}(\l)} 
\;, \; j\geq 1 \,. \eeq

\

Here are some basic definitions and observations concerning 
these non-autonomous dynamical systems.

\blem\label{1 or infinite}
The Julia set $\cJ_\l$ of a non-autonomous map $f_\l$ is either infinite or
there exists $N\geq 0$ such that, for every $n\geq N$,
 $\cJ_{\sg ^n (\l)}$ consists in at most two points.
\elem

\bpf
From the invariance property \eqref{invariance} it is clear that
either all the sets $\cJ_{\sg^n(\l)}$, $n\geq 0$, are simultaneously infinite or finite and that
the sequence $n_\l= \#\cJ_{\sg^n(\l)}$ is decreasing hence stabilising when finite.
Suppose that $\#\cJ_\l <\infty$ and let $N$ be the first integer such that 
$$n_\l= (n+1)_\l \quad \text{for every } \;\; n\geq N\, .
$$
Since, by assumption, the functions of $\cF$ are not injective,
it follows that every point of $\cJ_{\sg^N(\l)}$ is a
totally ramified point of  $f_{\sg^N(\l)}$. Therefore we are done since a rational map of degree
at least two has at most two such points.
\epf

As usually, $\cJ_\l$ is called \emph{perfect} if it does not have isolated points.
In the case where $\cJ_\l$ is an infinite set then it is automatically perfect
provided the map satisfies the following mixing property.

\bdfn\label{topo ex}
A map $f_\l$ is topologically exact if, for every open set $U$ that intersects $\cJ_\l$,
there exists $N\geq 1$ such that $f_\l^N(U)\supset \cJ_{\sg^N (\l)}$.
\edfn

As we will see in Example \ref{example}, non-autonomous maps need not be topologically
exact. However, this mixing property is satisfied in most natural settings and is a mild 
natural dynamical condition. B\"uger \cite{buger01} showed that polynomial non-autonomous maps 
with bounded coefficients are topologically mixing.  
This results suggest most likely that $f_\l$ is topologically exact if $\{\l_j\}_j$ is pre-compact in $\set_0$.

\

Non-autonomous maps are very general and 
many of the basic properties valid in the deterministic case are no longer true here.
For example, in the deterministic case a point is in the Julia set if \emph{no} subsequence of the iterates
is normal. Also, deterministic Julia sets are known to be perfect sets. Both these properties are
no longer true in the non-autonomous setting. To illustrate this and some other particularities
we provide here two simple examples.

\bexam\label{example} \rm
Let $f(z)=z^2$ and $h_j(z)=\al_j z$ for some $\al_j>0$, $j\geq 0$.
There are numbers $\l_j>0$ such that for every $ j\geq 1$
\beq \label{ex 1}
h_j \circ f= f_{\l_j}\circ h_{j-1} \text{ where } f_{\l_j}(z)=\l_j z^2 \; .
\eeq
In other words, the deterministic map $f$ is conjugated by the similarities
$(h_j)_j$ to the non-autonomous map $f_\l$. 
The numbers $\al_j$ can be chosen such that
$f_\l ^n(z)= f^n(z)=z^{2^n}$ for even $n$ and  $f_\l ^n(z)= r_nf^n(z)=r_nz^{2^n}$ for odd $n$.
In here the coefficients $r_n$ are chosen to decrease to zero so fast that the sequence
$(f_\l^n)_{n \; odd}$ is normal at every finite point $z\in \C$. Notice that then $(f_\l^n)_{n \; odd}$
is not normal at infinity from which easily follows that 
$$\J_\l = \cS^1 \cup \{\infty \}\,.$$
In particular, this example shows that the conjugation \eqref{ex 1} does not preserve the Julia sets.
Also, the initial system is perfect and topologically exact whereas the new non-autonomous 
map has neither of these properties.
\eexam

\bexam\label{example 2} \rm
Consider $f$ a hyperbolic rational function such that the Fatou set of $f$ has infinitely many
distinct connected components $U_1,U_2,...$. For example, one might take $f(z)=z^2+c$ where $c=-0.123+0.745i$
and where the associated Julia set $\cJ(f)$ is Douady's rabbit. Now, similarly to the first example,
we will modify this deterministic map by conjugating it to a non-autonomous map $f_\l$
where
$$f_{\l_n}= \cM_{n+1}\circ f\circ \cM_n^{-1}\,.$$
This times, $\cM_n=Id$ for even $n$ and, for odd $n$, $\cM_n$ is a M\"obius transformations 
of the Riemann sphere such that $\cM_n(U_n) \supset \cbar \setminus D(0,r_n)$
where $r_n\to 0$.

Notice that $f_{\sg ^{2k}(\l )}^2= f^2$ for every $k\geq 0$. It follows that the deterministic set $\cJ(f)$
is a subset of the non-autonomous set $ \cJ_\l$. On the other hand, it is easy to see that 
$\cF(f)\subset \cF_\l$. Therefore, both systems have the same Julia set $\cJ(f)=\cJ_\l$.

In this example, the conjugation preserves the Julia and Fatou sets.
However, although we started from a hyperbolic hence expanding function $f$, for the non-autonomous
map $f_\l$ we have that 
$$|(f_\l^{2k+1})'|\to 0 \quad \text{ on } \quad \cJ_\l$$
 provided the numbers $r_{2k}\to 0$ sufficiently fast.
\eexam

Further examples with pathological  properties can be found 
e.g. in \cite{Bruck3} and especially in the very interesting papers
\cite{Si10, Si11} by H. Sumi.

Both above examples are obtained in conjugating a deterministic map.
The reason why in both cases the resulting dynamics differ from the original
ones is the  the lack of equicontinuity of the conjugating family of similarities
or M\"obius transformations respectively.
Given this observation it is natural to introduce the following definition.

\bdfn\label{1}
Two non-autonomous maps $f_\l$ and $f_\mu$ are 
 conjugated
if there are homeomorphisms $h_j:\cbar \to \cbar $ such that 
\beq \label{33} h_{j+1}\circ f_{\l_j} =f_{\mu_{j}}\circ h_j\quad \text{holds on $\oc$ for every } j\geq 1\;.
\eeq
If in addition
 the families $\{h_j\}_j$ and $\{h_j^{-1}\}_j$ are equicontinuous
then $f_\l$ and $f_\mu$ are called \emph{bi-equicontinuous conjugated.} 
In the case the homeomorphisms $h_j$ being
(quasi)--conformal  then 
 we say that
the maps are (quasi)--conformally  conjugated or (quasi)--conformally bi-equicontinuous conjugated.
\edfn

The notion of bi--equicontinuous conjugation is consistent with the notion of
 affine conjugations used by Comerford in \cite{Co03}.

Often it is necessary to consider conjugations that do only hold on the Julia sets.
But, in order to do so, it is necessary to first ensure that the conjugating maps do
identify the Julia sets. Clearly, bi-equicontinous conjugations have this property.
As we have seen in Example \ref{example}, conjugations may not. Nevertheless,
in some special cases like in the Example \ref{example 2} Julia sets are preserved.
 Here is a more general statement where this also holds.

\blem[Rescaling Lemma] \label{identif julias}
Suppose that $f_\l$ is a topologically exact non-autonomous map such that all the Julia
sets $\cJ(f_{\sg^n(\l)})$, $n\geq 0$, contain at least three distinct points.
Suppose that $h_n$ are homeomorphisms of $\cbar$ such that $0,1,\infty \in h_n(\cJ(f_{\sg^n(\l)}))$
and such that $(h_n)_n$ conjugates $f_\l$ to the non-autonomous map $g_\l$. Then 
$$ \cJ(g_{\sg^n(\l)}) = h_n(\cJ(f_{\sg^n(\l)})) \quad \text{ for every} \quad n\geq 0\,.$$
\elem

\bpf
It suffices to establish the required identity for $n=0$, i.e. we have to show that
$\cJ(g_\l)= \tilde \cJ_\l$ if $\tilde \cJ_\l= h_0(\cJ(f_\l))$.
Let $\al_1^n,\al_2^n,\al_3^n\in \cJ_{\sg^n(\l)}$ be the points that are mapped by $h_n$
onto $0,1,\infty$ respectively.
If $\tilde z\not\in \tilde \cJ_\l$ then it is easy to see from the conjugations that $\tilde z$ has an open neighborhood
$U$ such that $g_\l^n(U)$ does not contain any of the points $0,1,\infty$. Therefore, Montel's Theorem
yields that $\cbar \setminus \tilde \cJ_\l\subset \cF(g_\l)$ or, equivalently, that $\cJ(g_\l)\subset \tilde \cJ_\l$.

Suppose now that there exists $\tilde z\in \tilde \cJ_\l\cap \cF(g_\l)$. Then there exists an open neighborhood
$U$ of $\tilde z$ such that $(g_\l^n)_n$ is normal on $U$. Let $\ph$ be the limit on $U$ of a convergent 
subsequence of $(g_\l^n)_n$. Shrinking $U$ if necessary, we may assume that
 one of the points $0,1,\infty$ is not in $\ph (U)$.
 Let $\tilde W$ be an open neighborhood of $\tilde z$ such that $\tilde W$ is relatively compact in $U$.
Since $z= h_0^{-1}(\tilde z)\in \cJ (f_\l)$,  the open set $W= h_0^{-1}(\tilde W)$
intersects $\cJ (f_\l)$. By assumption, the map $f_\l$ is topologically exact. Therefore, there is $N>0$ such that
$f_\l^n(W)\supset \cJ_{\sg ^n(\l)}$   for every $n\geq N$. It follows that $g_\l^n (\tilde W ) \supset \{0,1,\infty\}$
for every $n\geq N$. But then we get the contradiction that $\{0,1,\infty\}\subset \ph (U)$.
We showed that $\tilde \cJ_\l\subset \cJ(g_\l) $ and thus both sets coincident.
\epf




\section{Stability and normality of critical orbits}\label{sec 3}

In this section we study holomorphic stability and establish, in particular, Theorem \ref{holo stab}.
We would like to mention that Comerford in \cite{Co08} has a partial result in this direction.
He shows holomorphic stability for certain polynomial non-autonomous systems provided they are
hyperbolic. Our result is an if and only if condition for the stability of a general non-autonomous 
rational map. The condition relies on the dynamics of the critical orbits and,
due to the great generality of non-autonomous systems,
we are lead to consider two different conditions of normal critical orbits. In the Proposition 
\ref{prop 1} and in Theorem \ref{thm 3} we relate them to holomorphic stability
and they yield Theorem \ref{holo stab}.

In the following  we suppose that $\set\subset \set_0^\N$ is a complex Banach manifold.
A canonical choice is to take $\set = \set_0^\N$ and to equip this space with the Tychonov topology.
A more relevant example is to work with the $l^\infty$-topology. Given any function
$\omega: \N\to ]0,\infty[$ then we can take $\Lambda$ the interior
of $\set_0^\N \cap l_\omega^\infty (\C^d)$ in $l_\omega^\infty (\C^d)$ (remember that $\set_0
\subset \C^d$) where the weighted sup-norm
is given by $\|\l\|_{\omega , \infty}:= \sup_j |\omega (j) \l_j|$.
 Denoting this space $\Lambda= l_\omega^\infty (\set _0)$,
then a sequence $\l\in  \set_0^\N$ belongs to $l_\omega^\infty (\set _0)$ if and only if 
$(\omega(1)\l_1,\omega(2)\l_2,..)$ is a bounded sequence such that $\inf _j \omega(j)dist(\l_j , \partial \set_0 )>0$.

Starting from Section 4 we most often deal with uniform hyperbolic maps (see Definition \ref{uniform hyp}).
Then the natural associated parameter space is $\set = l^\infty (\set _0)$, i.e. the space
$l_\omega^\infty (\set _0)$ with weight function $\omega \equiv 1$.

\subsection{Holomorphic motions}

Since this section relies on quasiconformal mappings
and holomorphic motions, we start by summarizing some facts from this theory.
Let $\basept\in \Lambda$ be a base point.

\bdfn\label{holo motion}
A \emph{holomorphic motion} of a set $E\subset \cbar$ over $\Lambda$ is a mapping
$h:\Lambda \times E \to \cbar$ having the following three properties.
\begin{itemize}
\item $h_{\basept}=id_E$,
\item for every $\l\in \Lambda$, the map $z\mapsto h_\l (z)$ is injective on $E$ and
\item for every $z\in E$, $\l\mapsto h_\l (z)$ is a holomorphic map on $\Lambda$ .
\end{itemize}
\edfn

As already mentioned in the introduction, Man\'e, Sad and Sullivan \cite{MSS83} initially established 
a $\l$--Lemma stating that 
any holomorphic motion of a set $E\subset \cbar$ over the unit disk of $\C$ can be extended 
to a holomorphic motion of the closure of $E$. Since then, this $\l$--Lemma
has been extensively studied and generalized. Most notably, 
Slodkowski \cite{Slo95} showed that every holomorphic motion over the unit disk is the restriction of a holomorphic motion of the whole sphere.
Hubbard \cite{Hubbard} discovered that this is false for holomorphic motions over higher-dimensional
parameter spaces and \cite{YaMi07} contains a simpler example.
Nevertheless, we dispose in the following $\l$--Lemma due to Mitra \cite{Mi00}
and Yiang-Mitra \cite{YaMi07}.

\bthm[$\l$--Lemma] \label{l-Lemma}
A holomorphic motion $h$ of a set $E\subset \cbar$ over a 
simply connected complex
Banach manifold $V$
with basepoint $\basept\in V$ extents to a holomorphic motion $H$ of $\overline E$
over $V$ such that 
\ben
\item for every $\l\in V$, the map $H_\l$ is a global quasiconformal map of $\cbar$
with dilatation bounded by $\exp (2\rho _V (\basept , \l ))$ where $\rho_V$ is the Kobayashi
pseudometric on $V$.
\item the map $(\l, z)\mapsto H_\l (z)$ is continuous.
\een
\ethm

\subsection{Holomorphic stability and normal critical orbits}
Here is the precise definition of the stability we use. Notice that, in this definition, the conjugating
maps $h_{\shift^n(\l)}$ are not necessarily bi-equicontinuous. We therefore have to include here that the conjugating maps 
identify the Julia sets.

\bdfn\label{h stability}
A map $f_\basept$, $\basept\in \set$, is  holomorphically stable if there is an open 
neighborhood $V\subset \set$ of $\eta$ and a 
family of holomorphic motions 
$\{h_{\shift^n(\l)}\}_n$ of $\{\J_{\shift^n(\basept)}\}_n$ over $V$ such that,
for every $\l\in V$, $h_{\shift^n(\l)} ( \J_{\shift^n(\basept)}) = \J_{\shift^n(\l )}$
and
$$h_{\shift^{n+1}(\l)} \circ f_{\shift^n(\basept)} =f_{\shift^{n+1}(\l)}\circ h_{\shift^n(\l)}
\quad \text{on} \quad \J_{\shift^n(\basept)} \;\; \text{for every} \;\; n\geq 0\,.$$
The set of holomorphic stable parameters is denoted by
$\set^{stable}$.
\edfn

In the theory by Man\'e, Sad and Sullivan \cite{MSS83} and, independently, Lyubich \cite{Lyu86},
 showing in particular density
of stable parameters in any deterministic holomorphic family
of rational functions, appear several equivalent characterizations
of stability. Most of this theory relies
heavily on the stability of repelling cycles which, in the present non-autonomous setting, do not exist at all.
There is one criterion of stability in  \cite{Lyu86} which turns out to be appropriate for generalization
to the present setting. This criterion exploits
the dynamics of the critical orbits
$c_\l\mapsto f_\l(c_\l)\mapsto ...\mapsto f^n_\l(c_\l) \mapsto ...$ under perturbation of $\l$. Indeed,
stability coincides with the normality of these orbits and, as already mentioned in the introduction,
Montel's Theorem implies that
such an orbit is stable if it avoids three values $\al _1^n (\l ),\al _2^n (\l ),\al _3^n (\l )$
depending holomorphically on $\l$ and staying some definite distance apart.
It is therefore natural to make the following definition.

\bdfn \label{NCO}
A map $f_{\basept}$ has
\emph{normal critical orbits} on $V$, an open neighborhood of $\basept$, if
there exist $\kappa >0$ and,
 for each $n\geq 0$, three holomorphic functions $\al _i^n:V\to \cbar$, $i=1,2,3$,
such that 
\beq \label{cond 3.2}
dist_S( \al _i^n (\l ) , \al _j^n (\l))\geq \kappa \quad \text{for all $\l \in V$ and $i\neq j$.}
\eeq
\beq \label{cond 3.3}
 f_\l ^{n}\big( \cC_{f_\l ^{n}}\big)\cap \{\al _1^n (\l ),\al _2^n (\l ),\al _3^n (\l )\}=\emptyset
  \quad \text{for all $\l\in V$ and
$n\geq 1$.}
\eeq
\beq \label{cond 3.4}
\text{If }\; \al _i^{n+k} (\l )= f_{\sg^n(\la)}^k (\al _j^n (\l )) \; \text{for some $\l\in V$ then this equality holds 
for all $\l\in V$.}
\eeq
 \edfn

Notice that \eqref{cond 3.3} is precisely \eqref{cond 1.3} and the compatibility condition \eqref{cond 3.4}
is also exactly the condition \eqref{cond 1.3} of Theorem \ref{holo stab}. 
Only the first condition \eqref{cond 3.2} differs from the corresponding one in Theorem \ref{holo stab}. 
It is a normalized version of condition \eqref{cond 1.2} 
in which we allow the functions $\al_j^n$ to have values not only in the corresponding Julia set but in the 
whole Riemann sphere.
If, in this definition, the condition \eqref{cond 3.2} is replaced by \eqref{cond 1.2}, then we 
will say that $f_{\basept}$ has
\emph{normal critical orbits in the sense of Theorem \ref{holo stab}} on $V$.

\bprop \label{prop 1}
Suppose that $\basept\in \set^{stable}$ is a holomorphic stable parameter
and that $\cJ_\basept$ is a perfect set. Then $f_\basept$ has normal critical orbits 
in the sense of Theorem \ref{holo stab}.
\eprop

\bpf
Consider first the map $f_\basept$ and let us define the points $\al_j^n(\basept )$
by induction. Since $\cJ_\basept$ is perfect, there exist
three distinct points $\al_1^0(\basept ), \al_2^0(\basept ), \al_3^0(\basept )\in \cJ_\basept$. 
Suppose that all the points $\al_j^k(\basept )$ are defined for $0\leq k <n$.
The set $\cJ_{\sg^n(\basept )}$ is also perfect and so there are distinct points
$$\al_1^n(\basept ), \al_2^n(\basept ), \al_3^n(\basept )\in \cJ_{\sg^n(\basept )}
\setminus \left[ f_\basept ^{n}\big( \cC_{f_\basept ^{n}}\big)  \cup 
 \bigcup_{k=0}^{n-1}  f_{\sg ^k(\basept)}^{n-k} (\al _j^k (\basept)) \right]\,.$$
By assumption there are holomorphic motions $\{h_{\shift^n(\l)}\}_n$ such that 
Definition \ref{h stability} is satisfied. It suffices now to set
$$ \al_j^n (\l):= h_{\shift^n(\l)} (\al_j^n (\basept)) \quad \text
{for every} \;\; \l\in V \text{ and all } n,j\,.$$

\epf

The following main result of this section goes in the opposite direction.
Notice that here we do not need any additional assumption. So, in particular,
 no topological exactness is needed.

\bthm\label{thm 3}
Suppose that $f_\basept$ has normal critical orbits.
Then $f_\basept$ is holomorphically stable, i.e. $\basept \in \set^{stable} $.
Moreover, the corresponding family of holomorphic motions  is bi--equicontinuous;
it gives rise to a bi--equicontinuous conjugation.
\ethm

Before giving a proof of it, let us first explain how Theorem \ref{holo stab}
results.

\bpf[Proof of Theorem \ref{holo stab}]
Given Proposition \ref{prop 1} we only have to show that normality of critical orbits
in the sense of Theorem \ref{holo stab} implies holomorphic stability.
Let $f_\basept$ be a map such that there exist functions $\al_1^n , \al_2^n ,  \al_3^n $
defined and holomorphic on some neighborhood $V$ of $\basept$
such that the conditions \eqref{cond 1.2}, \eqref{cond 1.3} and \eqref{cond 1.4}
are satisfied.
Let $\cM_{\sg^n(\l )}$ be a M\"obius transformation
sending the points $\al_j^n (\l)$, $ j=1,2,3$, to $0,1,\infty$ and consider $ \tilde f_{\sg^n(\l )}$
defined by 
\beq \label{eq 11}\tilde f_{\sg^n(\l )} \circ \cM_{\sg^n(\l )} = \cM_{\sg^{n+1}(\l )} \circ f_{\sg^n(\l )}
\quad \text{for every } \; \l\in V \;\; \text{and } n\geq 0\,.\eeq
By assumption, $f_\l$ is topologically exact near $\basept$, say on $V$.
Therefore, Lemma \ref{identif julias} applies and yields that
$$ \cJ(\tilde f_{\sg^n(\l )} ) =\cM_{\sg^{n}(\l )}\Big(\cJ (f_{\sg^n(\l )})\Big)\quad 
\text{ for all }\;\; \l \,,  n \;.$$
Since the functions $\l \mapsto \al_j^n(\l )$ are holomorphic on $V$, it suffices to 
establish holomorphic stability of $\tilde f_\basept$. 
This new function $\tilde f_{\basept }$ has normal critical orbits
(with functions $\tilde \al_j^n $ constant $0, 1$ or $\infty$) and so we would like to conclude
by applying Theorem \ref{thm 3}. 
However, on every fiber the map
$ \tilde f_{\sg^j(\l )}$, $j\geq 0$, belongs to a different holomorphic family
$\cF_j=\{ \tilde f_{\sg^j(\l )}\, ; \; \l \in V\}$. But, as already mentioned in Remark \ref{rem general},
the whole paper
 and especially Theorem \ref{thm 3} does hold in this generality
with the same proof. Therefore $\tilde f_{\basept }$ is holomorphically stable.
\epf


The remainder of this section is devoted to the proof of Theorem \ref{thm 3}.
In order to do so, suppose from now on that $f_\basept$ has normal critical 
orbits: there are $V$, an open neighborhood of $\basept$, and holomorphic 
functions $\al_j^n$ such that the conditions of Definition \ref{NCO} are satisfied.
Consider  the sets
$$E_{\sg ^j(\l),n}=
f_{\sg^j(\l )} ^{-(n-j)} \big( \{\al _1^n (\l ),\al _2^n (\l ),\al _3^n (\l )\}\big) \quad , \quad j\leq n$$
and
\beq \label{set e}
\cE_{\sg ^j(\l)}=\bigcup_{n\geq j} E_{\sg ^j(\l),n} \quad , \quad \l\in V
\text{ and } j\geq 0 \;.
\eeq

\bprop\label{pf 3}
For every $j\geq 0$, there are holomorphic motions $h_{\sg^j(\l)}: \cE_{\sg ^j(\basept)}
\to \cE_{\sg ^j(\l)}$ over $V$ such that
\beq \label{pf 3.1}
h_{\sg^j(\l)}(\al_i^j(\basept))= \al_i^j(\l)\quad \text{for all $\l\in V$ and $i\in \{1,2,3\}$ and}
\eeq
\beq \label{pf 3.2}
h_{\sg^{j+1}(\l)}\circ f_{\sg ^j (\basept )}= f_{\sg ^j (\l  )}\circ 
h_{\sg_j (\l )} \quad\text{ on $\;\;\cE_{\sg ^j(\basept)}$, $\;\l\in V$.}
\eeq
\eprop

\bpf
We explain how to obtain the motions in the case $j=0$. The general case is proven
exactly the same way.

Let $z_\basept\in \cE_\basept$ and let $n\geq 0$ be minimal such that $z_\basept\in E_{\basept, n}$.
A point  $z_{\basept}\in E_{\basept,n }$ if $f_{\basept}^n (z_{\basept})= \al _i^n(\basept )$ for some 
$i\in \{1,2,3\}$. Hence, we have to consider  the equation 
\beq \label{impl ft} f_{\l}^n (z)= \al _i^n(\l )\,.\eeq
We want to apply the implicit function theorem to this equation and get $z$ as a function
of $\l$. This is possible as long as $(f_{\l}^n)' (z)\neq 0$. If $(f_{\l}^n)' (z)= 0$,
then the point $ \al _i^n(\l )$ is a critical value of $f_{\l}^n$. However, the assumption
\eqref{cond 3.3} implies that this is not the case for $\l\in V$. Therefore there
is a uniquely defined holomorphic function $\l\mapsto z_\l$, $\l\in V$, starting at the 
given point $z_{\basept}$, if $\l=\basept$, and such that $(\l, z_{\l})$ is solution of \eqref{impl ft}.
Therefore, we can define $$h_\l (z_\basept)= z_\l\;\; ,\quad \l\in V\,.$$

If ever $z_{\l}\in E_{\l,k } \cap E_{\l ,n }$ for some $\l\in V$ and $1\leq k\leq n$, then 
there are $i,j\in\{1,2,3\}$ such that $\al _i^n(\l )
=f_{\sg^k(\l )}^{n-k} (\al _j^k(\l ))$.
But then the compatibility condition \eqref{cond 3.4} implies that 
the last equation holds for all $\l\in V$ and that it does not matter for
the definition of the function $\l \mapsto z_\l$ if we start with
$\al _i^n(\eta )$ or with $\al _j^k(\eta )$.
 
  The normalization \eqref{pf 3.1} and the conjugating relation \eqref{pf 3.2} are clearly
 satisfied simply by the way we constructed the holomorphic motions. Hence, the proof is complete.
\epf

We are now able to conclude the proof of Theorem \ref{thm 3} since
we can now apply Mitra's version of the $\l$--Lemma.
 Indeed, Theorem \ref{l-Lemma} asserts that the motions $h_{\sg^j (\l )}$ extend 
 to holomorphic motions of the closure $\overline \cK_{\sg^j (\l )}$. We continue
 to denote these extended motions by $h_{\sg^j (\l )}$. These maps
 $h_{\sg^j (\l )}$ are global quasiconformal homeomorphisms with 
 dilatation bounded by  $\exp (2\rho _V (\basept , \l ))$. Therefore, for every
 fixed $\l\in V$ the family  $(h_{\sg^j (\l )})_j$ is uniformly quasiconformal
 and normalized by \eqref{pf 3.1}. Since the points $\al_i^j (\l)$, $i=1,2,3$,
are at definite spherical distance (see Condition \eqref{cond 3.2}),
it results from standard properties of families of uniformly quasiconformal
mappings that the conjugation by  $(h_{\sg^j (\l )})_j$ is bi-equicontinuous.

Up to now we showed that Theorem \ref{thm 3} holds but with the julia sets
$\cJ_{\sg^j (\l )}$ replaced by the sets $\overline \cK_{\sg^j (\l )}$.
However it is not hard to see that $\cJ_{\sg^j (\l )}\subset \overline \cK_{\sg^j (\l )}$.
Indeed, for every open set $U\subset \cbar \setminus \overline \cK_{\sg^j (\l )}$ 
we have that
$$f_{\sg^j (\l )}^n (U) \cap \left\{ \al _1^{j+n}(\l), \al _2^{j+n}(\l),\al _3^{j+n}(\l)\right\}
=\emptyset \quad \text{for every } \, n\geq 0\,.$$
Hence, Montel's Theorem along with Condition \eqref{cond 3.2} imply that $U\subset \cF_{\sg^j (\l )}$.
Consequently, $\cJ_{\sg^j (\l )}\subset \overline \cK_{\sg^j (\l )}$
for every $j\geq 0$. The proof of Theorem \ref{thm 3} is complete.

\

From this study of holomorphic stability we get first informations concerning 
our initial problem, namely the behavior of the variation of the Julia sets
and of their dimensions. 

\bcor \label{cor 11}
Suppose that $\Lambda\subset \set_0^\N$ is a complex Banach manifold
and let $\basept \in \Lambda^{stable}$. Then, in some neighborhood of $\basept$ in $\Lambda$,
the function $\l \mapsto \cJ_\l$
is continuous and $\l \mapsto HD(\cJ_{\sg^j (\l )})$ as well as $\l\mapsto BD(\cJ_{\sg^j (\l )})$
are H\"older continuous with H\"older constants depending on $\l$ only.
\ecor

\bpf
The assertion on the H\"older continuity directly results from known properties of quasiconformal
mappings along with the fact that the distortions of the quasiconformal mappings
$h_{\sg^j (\l )}$ do only depend on $\l$ and not on $j\geq 0$.
Concerning the continuity of the Julia sets, this is a consequence of the continuity of 
the function
$(\l , z) \mapsto  h_{\sg^j (\l )}(z)$  (see property (2) of Theorem \ref{l-Lemma}).
\epf

\section{Hyperbolic non-autonomous systems} \label{sec 4}

 In deterministic dynamics a hyperbolic function is stable. 
 But if we perturb a deterministic hyperbolic function to a non-autonomous map
then the stability depends on the topology we use on the parameter space.
As an illustration we first consider the simple Tychonov convergence and 
explain that, for this topology,  every map
is unstable (see Proposition \ref{stable tychonov}).

Then we investigate non-autonomous hyperbolic and uniform hyperbolic functions and will see that the
later are stable
provided the parameter space is $\set=l^\infty (\set _0)$.
In oder to prove their stability  it suffices to use Theorem \ref{thm 3}. Indeed, the
 normal critical orbits condition is best appropriated since it is easy to check for hyperbolic
 maps.

\subsection{Stability and Tychonov topology}
Up to here, the parameter space $\Lambda$ was equipped with any arbitrary 
complex manifold structure. Let us inspect a particular case.

\bprop\label{stable tychonov}
Suppose that $\cF$ contains at least two deterministic hyperbolic maps having Julia sets
with different Hausdorff dimension. Suppose further that
$\set =\set_0^\N$ and that $\set$ is equipped with
the 
Tychonov structure induced by the simple convergence.
Then $$\Lambda^{stable} = \emptyset\;.$$
\eprop

\bpf
Let $\basept \in \Lambda$ and set $\d = HD(\cJ_\basept )$.
By hypothesis there exists $f_{\l_0}\in \cF$ a deterministic hyperbolic map
with  $\d'=HD(\cJ(f_{\l_0})\neq \d$.
Consider then 
$$\l^{(n)}=(\basept _1, \basept_2, ..., \basept_n, \l_0,\l_0,\l_0,...)\,.$$
On the one hand we have that $\l^{(n)} \to \basept$ point wise. On the other hand
we have $HD(\cJ_{\l^{(n)}})=\d'$ for every $n\geq 1$ and hence $HD(\cJ_{\l^{(n)}})\not\to HD(\cJ_\basept )$
as $n\to \infty$. But then it follows from Corollary \ref{cor 11} that $\basept$ cannot be a stable
parameter.
\epf

\subsection{Hyperbolicity}

Hyperbolic random systems have been studied in various 
papers (see e.g. \cite{Co06,Ses99} and also \cite{Si97} where hyperbolic 
semi-groups are considered). 
In these papers, normalized most often polynomial families are considered
and the definitions of hyperbolicity rely on uniform conditions.
We therefore call such functions \emph{uniformly} hyperbolic.

\bdfn \label{uniform hyp}
A  map $f_\l$ is \emph{uniformly hyperbolic} if the family $\left\{f_{\l_j}\,;\; j\geq 1\right\} $
is equicontinuous (which, for example, is the case if
$\{\l_j\,,\; j\geq 1\}$ is relatively compact in $\set_0$ or, equivalently, if $\l \in l^\infty (\set_0)$ ) and if
there exist $c>0$ and $\g>1$ such that for every $j\geq 0$ we have
\beq  \label{expanding} |(f_{\shift^j(\l )}^n)' (z)| \geq c\g ^n \quad \text{for all} \;\; z\in \J_{\shift^j(\l )} \;\; \text{and }\; n\geq 1\;.\eeq
The set of parameters of uniformly hyperbolic random maps is denoted by $\set^{u Hyp}$.
\edfn

For general families of non-autonomous maps this definition is not entirely satisfactory.
For instance, in the Example \ref{example 2} we have conjugated a deterministic hyperbolic
function by M\"obius maps. The resulting non-autonomous map does not satisfy the 
requirements of Definition \ref{uniform hyp}
although it shares many properties of maps that should be called hyperbolic.
It is uniformly expanding "up to a conformal change of coordinates".
Moreover, it is topologically exact which, as we will see (Lemma \ref{mix}), is a property that
uniform hyperbolic maps always have.

A natural candidate for the class of hyperbolic maps is to take all the maps that are M\"obius conjugate
to uniform hyperbolic maps. However, one has to be careful since the map given in Example \ref{example},
obtained by conjugation by similarities of a deterministic hyperbolic
function,
should really not be called hyperbolic.
Given these examples and Lemma \ref{identif julias} which ensures that the Julia sets are identified provided the dynamics are topologically exact, it is natural to introduce the following definition.

\bdfn\label{hyp maps}
A non-autonomous map $f_\l$ is \emph{hyperbolic}
if it is topologically exact, if $\# \cJ_{\sg^j(\l )} \geq 2$ for all $j\geq 0$ and if there are M\"obius transformations 
conjugating $f_\l$ to a uniformly hyperbolic map.
\edfn


We now consider uniform hyperbolicity greater in detail. 
Let
$\cV_\d (E)=\{z\; ; \; dist (z,E)<\d \}$ be the $\d$-neigborhood of the set $E$.

\blem \label{improving expansion}
The map $f_\l$ is uniformly hyperbolic if and only if the family $\left\{f_{\l_j}\,;\; j\geq 1\right\} $
is equicontinuous and there exist $\d >0$, $N\geq 1$ and  $\tau >1$ such that 
\beq  \label{d-expanding} 
|(f_{\shift^j(\l )}^N)' (z)| \geq \tau >1 \quad \text{for all} \;\; z\in \cV_\d(\cJ_{\shift^j(\l )}) \;\; \text{and }\; j\geq 0\;.\eeq
In particular, if $f_\l$ is uniformly hyperbolic then there exist $\d >0$ such that 
for all $n\geq 1$, $j\geq 0$ and $z\in \cJ_{\shift^{n+j}(\l )}$
all holomorphic inverse branches of $f_{\shift^j(\l )}^n$ are well defined on $D(z,\d)$ have uniform distortion
and are uniformly contracting.
\elem

\bpf
Suppose that $f_\l$ is uniformly hyperbolic and fix $N\geq 1$
such that $c\g^N>1$. Suppose that \eqref{d-expanding} does not hold.
More precisely, suppose that for any $\d>0$ and any $1<\tau < c\g^N$
there exist $w=w_{\d , \tau }\in \cV_{\d}\left(\cJ_{\shift^j(\l )}\right)$
for some $j=j_{\d , \tau }\geq 0$ such that 
$$| (f^N_{\shift^j(\l )})'(w) | \leq \tau\,.$$
Let $z_{\d, \tau }\in \cJ_{\shift^j(\l )}$ such that $|z_{\d, \tau }-w_{\d, \tau }|<\d$.
Due to the equicontinuity of the family $\left\{ f_{\sg^j{(\l)}}^N\; , \; j\geq 0\right\}$
we can choose sequences $\d_n\to 0$, $\tau _n \to 1$ such that the 
corresponding functions $ f^N_{\shift^{j(n)}(\l )}\to \ph$ and  points $w_{\d_n, \tau _n} \to \xi$,
$z_{\d_n , \tau_n }\to \xi$
converge as $n\to \infty$. But then it is easy to see that $|\ph '(\xi )|\leq 1$ and, in the same
time, $|\ph '(\xi )|\geq c\g^N>1$. This contradiction shows that uniform hyperbolicity
implies \eqref{d-expanding}. The other assertion results now from standard arguments.
\epf


In the case of  deterministic iteration of rational functions there are several equivalent conditions
for hyperbolicity. One of them is the \emph{expanding} condition, another condition demands that
critical orbits are captured by attracting domains. Here is a version in the non-autonomous case
which in fact is an adaption of \cite{Ses99}.

\begin{prop} \label{2}
A  map $f_\l$ is uniformly hyperbolic if and only if
there exist $m_0 >0$ and open sets $U_j$ such that, 
for every $j\geq 0$,
\ben
\item $\overline{f_{\shift^j(\l )} (U_j ) }\subset U_{j+1}$ and  $dist_S (f_{\shift^j(\l )} (U_j ) , \partial U_{j+1})\geq m_0$,
\item $D(z, m_0 ) \cap U_{j}=\emptyset$, for every $z\in \cJ _{\shift^j(\l )}$, and
\item the critical points of $f_{\shift^j(\l )}$ are contained in $U_j$.
\een
\end{prop}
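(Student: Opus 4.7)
Starting from uniform hyperbolicity, Lemma \ref{improving expansion} supplies $\delta>0$ and $\gamma>1$ such that for every $j\geq 0$ and $w\in\cJ_{\sg^{j+1}(\l)}$, all holomorphic inverse branches of $f_{\sg^j(\l)}$ are well defined on $D(w,\delta)$ with bounded Koebe distortion and contraction factor at least $\gamma$. This yields the key inclusion
\[
f_{\sg^j(\l)}^{-1}\bigl(\cV_\delta(\cJ_{\sg^{j+1}(\l)})\bigr)\subset \cV_{\delta/\gamma}(\cJ_{\sg^j(\l)}).
\]
I then set $U_j:=\cbar\setminus \overline{\cV_r(\cJ_{\sg^j(\l)})}$ with $r=\delta/\gamma$, and choose $m_0>0$ satisfying $m_0\leq r$ and $r+m_0<\delta$. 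Condition (2) is immediate; condition (3) holds because the bound $|(f_{\sg^j(\l)}^N)'|\geq \tau>1$ on $\cV_\delta(\cJ_{\sg^j(\l)})$ excludes any critical point of $f_{\sg^j(\l)}$ from that neighborhood; condition (1) then follows by contrapositive from the key inclusion above.

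\textbf{Backward direction $(\Leftarrow)$.} Set $W_j:=\cbar\setminus\overline{U_j}$. Condition (2) ensures $\cV_{m_0}(\cJ_{\sg^j(\l)})\subset W_j$, condition (3) makes $f_{\sg^j(\l)}$ unbranched on $W_j$, and taking complements in (1) shows that $f_{\sg^j(\l)}^{-1}(W_{j+1})$ is compactly contained in $W_j$ with a uniform $m_0$-collar. Hence $f_{\sg^j(\l)}\colon f_{\sg^j(\l)}^{-1}(W_{j+1})\to W_{j+1}$ is an unbranched covering. Equipping each $W_j$ with its Poincar\'e density $\lambda_{W_j}$, the covering is a local isometry while the strict compact inclusion $f_{\sg^j(\l)}^{-1}(W_{j+1})\hookrightarrow W_j$ is a Schwarz--Pick contraction whose rate is bounded away from $1$ uniformly in $j$, thanks to the $m_0$-collar. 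Composing $n$ pullbacks yields uniform hyperbolic expansion of $f_{\sg^j(\l)}^n$. To pass back to the spherical derivative on $\cJ_{\sg^j(\l)}$ I use the inclusion $D(z,m_0)\subset W_j$ to bound $\lambda_{W_j}(z)$ from above, and argue that the forward orbits of critical points (which by (3) and an iterated use of (1) all lie in the various $\overline{U_k}$) accumulate with definite spherical mass to bound $\lambda_{W_j}(z)$ from below. The resulting inequality $|(f_{\sg^j(\l)}^n)'|_S\geq c\gamma^n$ on $\cJ_{\sg^j(\l)}$, uniform in $j$, is exactly uniform hyperbolicity; the equicontinuity of $\{f_{\l_j}\}_j$ required by Definition \ref{uniform hyp} follows because the $U_j$-structure confines the $\l_j$ to a relatively compact subset of the parameter family.

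\textbf{Main obstacle.} The delicate step is the hyperbolic-to-spherical comparison in $(\Leftarrow)$. The $m_0$-collar from (2) only controls $\lambda_{W_j}$ from above at Julia points; without an \emph{a priori} constraint the sets $W_j$ could balloon toward $\cbar$ and let $\lambda_{W_j}$ degenerate there. Extracting a matching uniform lower bound requires combining (3) with the forward iteration of (1) to produce at least three points in $\overline{U_j}$ that stay at a uniform spherical distance from one another and from $\cJ_{\sg^j(\l)}$, so that $W_j$ is contained in a uniformly punctured sphere and $\lambda_{W_j}$ is uniformly bounded below near the Julia set. Making this geometric control stable in $j$ is where the real work of the equivalence lies.
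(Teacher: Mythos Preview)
Your overall strategy matches the paper's: the backward direction via hyperbolic geometry and Schwarz--Pick is exactly what the paper does, and your forward direction is actually more explicit than the paper, which simply refers to Sester's adapted-metric construction. Two points deserve comment.

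\emph{Backward direction, choice of domains.} You work with $W_j=\cbar\setminus\overline{U_j}$ and claim that taking complements in (1) yields $f_{\sg^j(\l)}^{-1}(W_{j+1})\subset\subset W_j$ with a uniform $m_0$-collar. But the $m_0$-collar in (1) lives in the \emph{target}: it controls $\mathrm{dist}_S(f(U_j),\partial U_{j+1})$, not the spherical gap between $\partial U_j$ and $f^{-1}(\overline{U_{j+1}})$. Pulling the collar back would require a uniform Lipschitz bound on the inverse branches, which you have not yet established. The paper sidesteps this by introducing the intermediate sets $V_{j+1}=f_{\sg^j(\l)}(U_j)$, $\tilde U_j=f_{\sg^j(\l)}^{-1}(V_{j+1})$ and their complements $\Omega_{j+1},\omega_j$: then $f_{\sg^j(\l)}:\omega_j\to\Omega_{j+1}$ is the covering, and the contraction comes from the inclusion $\omega_{j+1}\hookrightarrow\Omega_{j+1}$, whose $m_0$-collar is \emph{exactly} condition (1) with no pullback needed. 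Your argument can be repaired by switching to these domains.

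\emph{Equicontinuity.} Your final sentence asserts that the $U_j$-structure forces the parameters $\l_j$ into a relatively compact subset of $\set_0$, hence equicontinuity. This is not justified: conditions (1)--(3) are purely dynamical and do not visibly constrain the parameters. In fact the paper's outline does not address equicontinuity either, so it is best read as a standing hypothesis on $f_\l$ rather than a consequence of (1)--(3); you should treat it the same way rather than claim to derive it.

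Your identification of the hyperbolic-to-spherical comparison as the delicate step, and your proposed resolution via property (2) together with uniformly separated points in $\overline{U_j}$, is correct and is what the paper alludes to when it says ``it results from property (2) that it is possible to compare the hyperbolic and spherical distance.''
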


\bpf
Since most of the proof is standard we only give a brief outline of it.
Especially, finding the sets $U_j$ knowing that $f_\l$ is uniformly hyperbolic
is a straightforward adaption of Sester's arguments \cite[pp. 414-415]{Ses99}
which themselves are based on the deterministic case. The main idea is 
to build a metric in which all the functions $f_{\shift^j(\l )}$ have a derivative greater than
some constant $\g>1$ on $\cV_\d(\cJ_{\shift^j(\l )})$ for some $\d>0$.

The proof of the opposite implication
 is based on hyperbolic geometry. 
 Suppose the sets $U_j$ are given, set $V_{j+1}=f_{\sg^j(\l)}(U_j)$
 and $\tilde{U}_j=  f_{\sg^j(\l)}^{-1} (V_{j+1})$. Then $f_{\sg^j(\l)}:\tilde{U}_j \to V_{j+1}$ is a proper map
 and, the critical orbits being captured by the domains $U_j$ (see (3)),
  $f_{\sg^j(\l)}: \omega_j\to \Omega_{j+1}$ is a covering map where $\omega_j , \Omega _{j+1}$
 is the complement of the closure of $\tilde{U}_j , V_{j+1}$ respectively.
 Therefore this map is a local hyperbolic isometry with respect to the hyperbolic distances 
 of these domains. Property (1) implies that there is $0<c<1$ such that the inclusion map 
 $i:\omega_{j+1}\to \Omega_{j+1}$ is a hyperbolic $c$--contraction for all $j\geq 0$.
 Combining these properties it follows that $f_{\sg^j(\l)}$ is a $1/c$--expansion
 on $\J_{\sg^j(\l)}\subset \omega_j\cap f^{-1}(\omega_{j+1})$ with respect to the 
 hyperbolic distances of $\omega_j$ and $\omega_{j+1}$. 
 Finally, it results from property (2) that it is possible to compare
 the hyperbolic and spherical distance for points in $\cJ_{\sg^j(\l )}\subset \omega_j$, $j\geq 0$, and to conclude.
 \epf

The topological characterization of Propositon \ref {2} and espacially the 
uniform control due to the constant $m_0$ implies the following.

\bcor \label{4}
 Uniform hyperbolicity is an open condition for the $l^\infty$-topology on $\set$
(but not for the Tychonov topology).  
Moreover, if $\basept \in \set^{hyp}$ then there is an open neighborhood
$V \subset \set^{hyp}$ of $\basept$ such that the open sets $U_j$ 
and the number $\d=\d(\l )>0$ given by Lemma \ref{improving expansion}
can be chosen 
to be the same for all the maps $f_\l$, $\l\in V$.
\ecor

This result immediatley implies the following continuity property of non-autonomous Julia sets
which, in various versions, is well known to the specialists
(see for example \cite{Bruck2, Ses99, Co06}.

\bprop\label{conti tycho}
Every $\basept\in \set^{uHyp}$ has an open  neighborhood $V\subset l^\infty(\set_0 )$
such that 
 the map $$ \l  \longmapsto    \cJ_\l $$ from $ ( V , \text{ Tychonov topology})$
into $(\cK (\cbar ), \text{ Hausdorff topology})$
is continuous.
\eprop

\bpf
Let $\basept\in \set^{uHyp}$ and let the open neighborhood $V$ of $\eta$ be relatively compact in $\set$ 
with respect to the $l^\infty$-topology
and chosen according to Corollary \ref{4}, i.e. 
 there are open sets $U_j$ such that 
every map $f_\l$, $\l\in V$, satisfies the conditions (1), (2) and (3) of 
Proposition \ref{2} with these sets $U_j$. Denote
$${\tilde U}_j =\{z\in U_j\;;\;\; dist_S(z, \partial U_j)>m_0/2\}\,.$$
Shrinking the neighborhood $V$ if necessary and replacing $m_0$ by a smaller constant
 we may assume that
the open sets ${\tilde U}_j $ satisfy also the conditions (1), (2) and (3) of 
Proposition \ref{2} for every $\l\in V$. Moreover, 
all inverse branches exist and are uniformly contracting on the complement of $\tilde U_j$, $j\geq 1$.

Define
$$\cA_\l ^n =\{ z\in \cbar \;;\;\; f_\l ^n (z) \not\in U_n  \}
\quad \text{and}\quad \tilde\cA_\l ^n =\{ z\in \cbar \;;\;\; f_\l ^n (z) \not\in \tilde U_n  \}\,.$$
Clearly  $\J_\l \subset\bigcap _n \cA_\l^n\subset\bigcap _n \tilde \cA_\l^n$. On the other hand, since
all inverse branches exists and are uniformly contracting on the complement of $\tilde U_j$, $j\geq 1$,
we have first of all that 
$\J_\l =\bigcap _n \cA_\l^n=\bigcap _n \tilde\cA_\l^n$ and, secondly,
that for every $\ep >0$ there exist $n=n_\ep\geq 1$ such that $\cA_\l^n\subset \tilde \cA_\l^n\subset \cV_\ep (\cJ_\l)$
for every $\l\in V$.

Fix $\ep>0$ and let $n=n_\ep$. Notice that the sets $\cA_\l^n$ and $\tilde \cA_\l^n$ do only depend on 
the $n$ functions $f_{\l_1},...,f_{\l_n}$. A standard compactness argument shows now that there exists 
$\d=\d(\ep)>0$ such that
$$\cA_\l^n\subset \tilde\cA_{\l'}^n \quad \text{for every} \quad \l,\l'\in V \;\; \text{ such that }
\sup_{i=1,...,n}|\l_i-\l'_i| <\d \,.$$
Therefore, for every $\l,\l'\in V$ such that $\sup_{i=1,...,n}|\l_i-\l'_i| <\d$ we have that
$$\cJ_\l \subset \cA_\l^n\subset \tilde\cA_{\l'}^n \subset \cV_\ep (\cJ_{\l'} )$$
This proves the proposition.
\epf

We conclude the discussion on uniform hyperbolicity with the following 
uniform mixing property.

\blem\label{mix}
Let $\l\in \set^{uHyp}$  and let $\d=\d (\l)$.
Then, for every $r_1>0$ and $0<r_2\leq\d$, there exist $N=N(r_1,r_2 )$ such that
for all $j\geq 0$, $z_1\in \J_{\shift^j (\l )}$ and $z_2\in \J_{\shift^{j+N} (\l )}$ we have that
$$f_{\shift^j (\l )}^N \big(D(z_1,r_1)\big) \supset D(z_2,r_2)\;.$$
In particular, $f_\l$ is (uniformly) topologically exact: for every $r_1>0$ there exist $N=N(r_1)$
such that for $j\geq 0$ and $z_1\in \J_{\shift^j (\l )}$ we have that
 $f_{\shift^j (\l )}^N \big(D(z_1,r_1)\big) \supset \cJ_{\sg^{j+N}(\l )}$.
\elem

\bpf
Suppose to the contrary that there exist $r_1>0$ and $0<r_2\leq\d$
and, for every $N$, $j_N\geq0$, $z_{1,N}\in \J_{\shift^j (\l )}$ and $z_{2,N}\in \J_{\shift^{j_N+N} (\l )}$
such that
\beq \label{eq mix 1}
D(z_{2,N}, r_2)\setminus f^N_{\shift^{j_N}(\l)}\left(D(z_{1,N}, r_1)\right) \neq \emptyset \,.
\eeq
Consider then $\ph _N(z)= f^N_{\shift^{j_N}(\l)} (r_1z+z_{1,N})$, $z\in \D$.
Since $f_\l$ is expanding on the Julia set the family $(\ph_N)_N$ is not normal at the origin.
Therefore there are infinitely many $N$ such that
\beq \label{eq mix 2}
\ph _N(\D(0,1/2))\cap D(z_{2,N}, r_2) \neq \emptyset \,.
\eeq
Since $r_2\leq \d$, all inverse branches of $f^N_{\shift^{j_N}(\l)}$ are well defined and have bounded
distortion on $D(z_{2,N}, r_2)$. It suffices then to choose $N$ big enough and to deduce from expanding
along with \eqref{eq mix 2} that
$$f^{-N}_{\shift^{j_N}(\l),*}\left(D(z_{2,N}, r_2)\right) \subset D(z_{1,N}, r_1)$$
where $f^{-N}_{\shift^{j_N}(\l),*}$ is some well chosen inverse branch. This contradicts \eqref{eq mix 1}.
\epf

\subsection{Hyperbolicity and stability}
The definition of hyperbolic map is based on uniform controls, e.g. 
the iterated maps $f_{\sg^j (\l)}^n$ are expanding uniformly in $j$. 
With respect to this and in order to deal with perturbations of hyperbolic
functions it is natural to equip the parameter space  $\Lambda$
with the sup-norm, i.e. to work with the space $\set=l^\infty (\set_0)$.
 Throughout the rest of this paper
we suppose that $\set$ is this particular Banach manifold.

As already mentioned, in order to establish stability of uniformly hyperbolic maps,
the condition of normal critical orbits 
as defined in Definition \ref{NCO} is perfectly adapted  since
easy to verify for such functions.

\bprop \label{hyp sing}
If $f_{\basept}$ is a uniform hyperbolic map, then $f_{\basept}$ has normal singular orbits 
on some open neighborhood $V\subset \set$ of $\basept$.
\eprop

\bpf
By Corollary \ref{4}, there is an open neighborhood $V \subset \set$ 
such that the open sets $U_n$ in Proposition \ref{2} can be chosen independently on $\l\in V$.
Since we know that 
$$dist_S(f_{\l_n}( U_n), \partial U_{n+1} )\geq m_0$$
we can find  three points $a_i^0\in U_0$ and, if if $n>0$,
 $$a_i^n\in  U_n \setminus\bigcup_{\l\in V} f_{\l_{n-1}} (\overline U_{n-1})$$
such that $dist_S(a_i^n , a_j^n)\geq c_0$ for some 
$c_0>0$ and for all $n\geq 0$ and $i\neq j$.
Since $\cC_{f_{\l_j}}\subset U_j$, $j\geq 1$, we have the inclusion
$f_\l ^n (\cC_{f_\l ^n})\subset f_{\l _n}(U_{n-1}) \subset U_n$.
The constant functions $\l\mapsto \al_i^n(\l ) =z_i^n$, $\l \in V$,
therefore satisfy the conditions (1) and (2) of Definition \ref{NCO}
and appropriate perturbations of these constant functions if necessary
yield that Condition (3) of this definition is also satisfied.
Therefore, $f_\l$ has normal critical orbits on $V$.
\epf

The following statement follows now from Theorem \ref{thm 3}.

\bcor
$\set^{uHyp}\subset \set^{stable}$ when equipped with the $l^\infty$-topology.
\ecor

\section{Conformal measures, pressure and dimensions} \label{sec 5}
In this section we consider a single non-autonomous uniformly hyperbolic map $f_\l$,
$\l=(\l_1,\l_2,...) \in \set^{uHyp}$.
Remember that all the derivatives are taken with respect to the spherical metric.
Since $\{ \l_n\}_n$ is relatively compact in the set $\set_0$ and since the 
rational maps are Lipschitz with respect to the spherical metric
 \cite[Theorem 2.3.1]{Beardon}, there is a constant $A<\infty$ such that 
\beq \label{Lip}
|f'_{\shift^j(\l)}(z)|\leq A \quad \text{for all} \quad z\in \cbar \text{ and } j\geq 1\,.
\eeq

\subsection{Conformal measures}
Let $t\geq 0$ and consider the operators $\pfj: \cC (\J_{\sg^j({\l })} ) \to 
\cC (\J_{\sg^{j+1}(\l) } )$ defined by
\beq \label{def pf}
\pfj g (w) = \sum_{f_{\sg ^j(\l)}(z)=w} |f'_{\sg ^j(\l)}(z)|^{-t} g(z) \quad , \quad 
w\in \J_{\sg^{j+1}(\l)}\;.
\eeq

\bprop \label{conf measures}
For every $t\geq 0$ there exist a sequence of probability measures 
$m_{{\sg ^j(\l)},t}\in \cP\cM(\J_{\sg^{j}(\l)})$ and positive numbers $\rho_{\sg ^j(\l),t}$ such that 
\beq \label{inv pf}
\pfj^* (m_{{\sg ^{j+1}(\l)},t}) = \rho_{\sg ^j(\l),t} m_{{\sg ^j(\l)},t} \quad \text{for all } \; j\geq 0\;.
\eeq
Moreover, there exist a sequence $N_k\to \infty$ and points $w_k\in  \J_{\sg^{N_k}(\l)}$ such that
\beq \label{expression rhos}
\rho _{{\sg ^j(\l)},t} =\lim_{k\to\infty}\frac{\pfj^{N_k-j} \1(w_k)}{\pfjj^{N_k-j-1} \1 (w_k)} \quad \text{for all } \; j\geq 0\;.
\eeq
\eprop

Measures, actually a sequence of measures, satisfying \eqref{inv pf} are called \emph{$t$--conformal}.
To simplify the notations we will use often in this section the following shorthands
$$m_{j,t}=m_{{\sg ^j(\l)},t} \quad \text{and} \quad \rho_{j,t}=\rho_{\sg ^j(\l),t}\,.$$
This does not lead to confusions since the parameter $\l\in \set^{uHyp}$ is fixed.

\bpf
Choose for every $N\geq 0$ arbitrarily a point $w_N\in \J_{\sg^{N}(\l)}$ and consider
the probability measures
$$m_j^N = \beta _j^N \left( \pfj ^{N-j} \right)^* \d _{w_N} \text{ where } 
\beta _j^N=\left( \pfj ^{N-j}\1 (w_N)\right)^{-1}\; .$$
Observe that 
\beq \label{cm 1}
\pfj ^* (m_{j+1}^N) = \frac{\pfj ^{N-j} \1(w_N)}{\pfjj ^{N-j-1}\1(w_N)}m_j^N \quad \text{for all } 
0\leq j \leq N-1 \,.
\eeq
Let $N_k\to \infty$ be a sequence such that all the measures $m_j^{N_k}$ converge weakly
as $k\to \infty$ and denote $m_{j,t}=\lim_{k\to \infty} m_j^{N_k}$.
It follows then from \eqref{cm 1} that, for every $j\geq 0$, the limit 
\eqref{expression rhos} also exists and that we have \eqref{inv pf}.
\epf

\brem
It is a standard observation (see \cite{DenUrb91a}) that \eqref{inv pf} is equivalent with
\beq \label{change of vari}
dm_{j+1,t}\circ f_{\l_j} = \rho_{j,t} |f_{\l_j}'|^t dm_{j,t} \;.
\eeq
\erem

The explicit expression \eqref{expression rhos} for the generalized eigenvalue $\rho _{{\sg ^j(\l)},t}$ 
leads to the following very useful bounds.

\blem \label{bounds rhos} With the notations of Proposition \ref{conf measures},
we have
for every $j\geq 0$ and $t\geq 0$ that
$$ A^{-t} deg(f_\l) \leq \rho _{j,t}  \leq a^{-t} deg(f_\l)\,.$$
\elem

\bpf
Since
$\pfj^{N_k-j} \1(w_k)= \pfjj^{N_k-j-1}\left(\pfj \1\right)(w_k)$ and since
\beq\label{estimation pf}
A^{-t} deg(f_\l)\leq \pfj\1(z) \leq a^{-t} deg(f_\l)\quad \text{for all } z\in \cJ_{\sg^{j+1}(\l )}\eeq
the lemma follows from the expression \eqref{expression rhos}.
\epf

Remember that $\d=\d (\l)$ is such that all inverse branches are well defined and have
bounded distortion on disks of radius $\d$ centered on Julia sets. 

\blem\label{lemm cm 1}
For every $t\geq 0$, there exist a constant $C_t\geq 1$  such that 
for every $t$--conformal measure $m_{j,t}$ and associated  $ \rho_{j,t}$ and for all
$r>0$ and $z\in \J_{\shift^j(\l)}$  we have
$$ C_t ^{-1}\rho _{j,t}^{-n} \leq \frac{m_{j,t}(D(z,r))}{r^t} \leq C_t  \rho _{j,t}^{-n} $$
where $\rho_{j,t}^n= \rho_{j,t}\rho_{j+1,t}... \rho_{j+n-1,t}$ and $\rho_{j,t}^{-n}=\left(\rho_{j,t}^n\right)^{-1}$
and where $n\geq 1$ is maximal such that 
 $|(f_{\sg^j(\l)}^n)'(z)|^{-1}\geq \frac{r }{\d}$.
\elem

\bpf 
First of all, since $f_\l$ is expanding we 
have a lower bound 
of the derivatives $|f'_{\sg_j(\l )}|$
on Julia sets. Together with the Lipschitz estimation \eqref{Lip} it follows that
 there is $a>0$
such that 
\beq \label{biLip}
a\leq |f'_{\shift^j(\l)}(z)|\leq A\quad \text{for all} \quad z\in \J_{\shift^j(\l)}  \text{ and }j\geq 1\, .
\eeq
Therefore, if $z\in  \J_{\shift^j(\l)}$ and if we put $r_n=|f^n_{\shift^j(\l)}(z)|^{-1}$
then for every $r>0$ there exist $n$ such that 
\beq \label{good n} r\asymp r_n\,.\eeq
with implicit constants independent of $z$, $j$.
Therefore it suffices to establish Lemma \ref{lemm cm 1} for radii of the 
form $r=r_n=|f^n_{\shift^j(\l)}(z)|^{-1}$. But this follows from a standard zooming argument
along with the conformality of the measures. More precisely from formula \eqref{change of vari}
provided we can prove the following claim.

\begin{claim} \label{claim 1}
There is a constant $c>0$ such that for every sequence of $t$--conformal measures $m_{j,t} $
we have that 
\beq \label{lowe bd} m_{j,t}(D(z,\d))\geq c \;\text{ for all $j\geq 0$ and $z\in \J_{\shift^j(\l)}$.}\eeq
\end{claim}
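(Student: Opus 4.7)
The plan is to exploit uniform topological exactness together with the iterated conformality relation and the global bounds of Lemma \ref{bounds rhos}. Fix $t\geq 0$ and $\delta=\delta(\l)$ from Lemma \ref{improving expansion}, so that the mixing bound of Lemma \ref{mix} applies. Apply the ``in particular'' part of Lemma \ref{mix} with radius $\delta/2$ to obtain an integer $N=N(\delta/2)$, independent of $j$, such that
\[
f^{N}_{\shift^j(\l)}\big(D(z_0,\delta/2)\big)\supset \cJ_{\shift^{j+N}(\l)}
\quad\text{for every }j\geq 0\text{ and every }z_0\in\cJ_{\shift^j(\l)}.
\]
In particular, every $w\in\cJ_{\shift^{j+N}(\l)}$ admits at least one preimage $z_w\in D(z_0,\delta/2)\subset D(z_0,\delta)$ under $f^{N}_{\shift^j(\l)}$.

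Next, iterate the conformality relation \eqref{inv pf}. Writing $\pfj^{N}=\cL_{\shift^{j+N-1}(\l),t}\circ\cdots\circ\pfj$ and $\rho_{j,t}^{N}=\rho_{j,t}\rho_{j+1,t}\cdots\rho_{j+N-1,t}$, one obtains
\[
\rho_{j,t}^{N}\,m_{j,t}\big(D(z_0,\delta)\big)
\;=\;\int_{\cJ_{\shift^{j+N}(\l)}}\pfj^{N}\,\mathbf 1_{D(z_0,\delta)}\,dm_{j+N,t}.
\]
For each $w$ in the support of $m_{j+N,t}$ I keep only the single preimage $z_w$ in the defining sum of $\pfj^{N}\mathbf 1_{D(z_0,\delta)}(w)$; the Lipschitz bound \eqref{Lip} and the chain rule give $|(f^{N}_{\shift^j(\l)})'(z_w)|\leq A^{N}$, hence pointwise
\[
\pfj^{N}\mathbf 1_{D(z_0,\delta)}(w)\;\geq\;|(f^{N}_{\shift^j(\l)})'(z_w)|^{-t}\;\geq\;A^{-Nt}.
\]
Integrating against the probability measure $m_{j+N,t}$ produces the lower bound $\rho_{j,t}^{N}\,m_{j,t}(D(z_0,\delta))\geq A^{-Nt}$.

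Finally, invoke Lemma \ref{bounds rhos}, which yields $\rho_{j,t}^{N}\leq (a^{-t}\deg(f_\l))^{N}$, again uniformly in $j$. Combining the two estimates,
\[
m_{j,t}\big(D(z_0,\delta)\big)\;\geq\;\frac{A^{-Nt}}{\big(a^{-t}\deg(f_\l)\big)^{N}}
\;=\;\Big(\tfrac{a}{A}\Big)^{Nt}\deg(f_\l)^{-N}\;=:\;c\;>\;0,
\]
and $c$ depends only on $t$, $\l$ and $\delta$, not on $j$ or on the particular choice of conformal sequence $(m_{j,t})_{j\geq 0}$. There is no genuinely delicate point in the argument; the only thing one has to pay attention to is that $N$ provided by Lemma \ref{mix} is independent of the fiber $j$, which is precisely what the \emph{uniform} topological exactness statement there guarantees, and this is exactly why uniform hyperbolicity (rather than mere hyperbolicity) is being used.
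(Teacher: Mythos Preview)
Your argument is correct and uses the same ingredients as the paper's proof (uniform mixing from Lemma \ref{mix}, the conformality relation, the Lipschitz bound \eqref{Lip}, and the eigenvalue bounds of Lemma \ref{bounds rhos}), but it is organized slightly more efficiently. The paper first runs a pigeonhole argument: since each $\cJ_{\shift^{j+N}(\l)}$ can be covered by at most $M$ disks of radius $\delta$, some disk $D_{j+N}$ has $m_{j+N,t}$--mass at least $1/M$; it then pulls back this single disk by one inverse branch of $f_{\shift^j(\l)}^N$ and applies the change-of-variables formula \eqref{change of vari} to get $m_{j,t}(D(z,\delta))\succeq A^{-tN}\rho_{j,t}^{-N}/M$. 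You bypass the pigeonhole step entirely by noting that the image of $D(z_0,\delta)$ under $f_{\shift^j(\l)}^N$ already contains the whole Julia set, which gives a pointwise lower bound $\pfj^{N}\mathbf 1_{D(z_0,\delta)}\geq A^{-Nt}$ on the support of $m_{j+N,t}$ and hence the same estimate after integrating, without the extraneous factor $1/M$. Both routes are equally valid; yours is a touch cleaner.
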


In order to establish this lower bound we first make the following general observation.
The sphere having finite spherical volume and the number $\d$ being fixed, there is
 an absolute number $M$ such that every Julia set $\J_{\shift^n (\l )}$ can be covered by 
 no more than $M$ disks of radius $\d$. Consequently there exist, for every $n\geq 0$,
 a disk $D_n=D(z,\d )$, $z\in \J_{\shift^n (\l )}$, having measure $m_{n,t}(D_n)\geq 1/M$.
 
 The mixing property of Lemma \ref{mix}  with $r_1=r_2=\d$
asserts that there is a number $N=N(\d)$ such that 
\beq \label{mixing N}
f_{\shift^j(\l)}^N (D(z,\d )) \supset D_{j+N}  \; \text{ for every } j\geq 0 \text{ and } z\in \J_{\shift^j (\l )}\,.
\eeq
Therefore, there is $\Omega\subset D(z,\d )$ such that $f_{\shift^j(\l)}^N:\Omega \to D_{j+N}$
is a conformal bijection with bounded distortion. With $\xi\in \Omega$  an arbitrarily chosen point we get
$$m_{j,t}(D(z,\d))\geq m_{j,t}(\Omega) \asymp |(f_{\shift^j(\l)}^N)'(\xi)|^{-t} \rho_{j,t}^{-N} 
m_{j+N,t}(D_{j+N}) \geq A^{-tN} \rho_j^{-N} /M$$
with $\rho_{j,t}$ the eigenvalues associated to $m_{j,t}$ by \eqref{inv pf}.

It remains to estimate $\rho_{j,t}^{N}$. But this has already been done in Lemma \ref{bounds rhos}
from which follows
that $\rho_j^{N}\leq a^{-Nt}deg(f_\l)^N $. Therefore, we get the final estimation
$$m_{j,t}(D(z,\d)) \geq \frac{1}{M}\left(\frac{a}{A}\right)^{tN}deg(f_\l)^{-N}
\; \text{ for all } j\geq 0 \; \text{ and } z\in \J_{\shift^j (\l)}\;.$$
\epf

As a first consequence of the previous result we get the following key estimation.
\blem\label{lemm cm 3}
For every $t\geq 0$, there exists a constant $D_t\geq 1$ such that
$$\frac{1}{D_t} \leq \rho_{j,t}^{-n} \pfj ^n\1 (w)\leq D_t \;\; \text{for every} \;\; j\geq 0\, , \; n\geq 1 \;\; 
\text{and} \;\; w\in \J_{\shift^{j+n}(\l)}\;. $$
\elem

\bpf
Let again $\d=\d (\l )$ and remember from the previous proof that
there is
 an absolute number $M$ such that, for every $j,n$, the Julia set $\J_{\shift^{j+n} (\l )}$ can be covered by 
 at most $M$ disks $D_i=D(z_i,\d )$, $i=1,...,M$, of radius $\d$. Let $j\geq0$, $n\geq 1$ and
 let $U_{i,k}$ be the 
 components of $f_{\shift^{j}(\l)}^{-n} (D_i)$. Notice that $\{ U_{i,k}\}_{i,k}$ is a Besicovitch 
 covering of $\J_{\sg^j(\l )}$, i.e. $z\in U_{i,k} $ can happen for
 at most $M$ indices $(i,k)$. Together with conformality of the measures we get that
 $$1\asymp \sum_{i,k}m_{0,t} (U_{i,k}) \asymp \rho_\l ^{-n}\sum_{i,k} |(f_\l)' (z_{i,k})|^{-t}m_{n,t} (D_i )$$
 where $z_{i,k}\in U_{i,k}$ is such that $f_\l^n(z_{i,k})=z_i$.
 Now, by Claim \ref{claim 1} we have that
 $m_{n,t} (D_i )\asymp 1$ from which follows that
 \beq\label{esti 1}
 1\preceq \rho_{j, t} ^{-n} M \max _{w\in \J_{\shift^{j+n} (\l)}} \pfj ^n \1 (w) \quad \text{and}\eeq
  \beq\label{esti 2}
 1\succeq \rho_{j,t} ^{-n} \pfj^n \1 (z_i) \quad \text{for every $i=1,...,M$.}\eeq
 The right-hand inequality of the lemma  follows now easily from Koebe's distortion theorem and \eqref{esti 2}.
 For the other inequality we proceed as follows. Let again $N=N(\d )$ be an integer such that
 the mixing property \eqref{mixing N} holds.  
 For all $n<N$ the required estimation is true (see \eqref{estimation pf}).
 Let $n\geq N$ and $j\geq 0$. Denote then $w_{max}\in \cJ_{\sg^{j+n-N}(\l )}$ a point such that
 $$\pfj^{n-N}\1 (w_{max})= \|\pfj ^{n-N} \1\|_\infty\,.$$
 Then \eqref{esti 1} yields $\pfj^{n-N}\1 (w_{max}) \succeq \rho _{j,t}^{n-N}$.
 Let $w\in \cJ_{\sg ^{j+n}(\l )}$ be any point. The choice of $N$ implies that there exists
 $a\in D(w_{max } , \d )\cap f_{\sg ^{j+n-N}(\l )}^{-N}(w_{max})$. Therefore
 $$\pfj ^n \1 (w) \geq \big| \big( f_{\sg ^{j+n-N}(\l )}^{N}\big)'(a)\big|^{-t} \pfj ^{n-N}\1 (a)\,.$$
Applying Koebe's Distortion Theorem yields
 $ \pfj ^{n-N}\1 (a) \asymp  \pfj ^{n-N}\1 (w_{max}) \succeq \rho _{j,t}^{n-N}$.
 Since, by Lemma \ref{bounds rhos}, $\rho_{j+n-N,t}^N\leq a^{-Nt}deg(f_\l)^N$ 
 and since $\big| \big( f_{\sg ^{j+n-N}(\l )}^{N}\big)'(a)\big|\leq A^N$ we finally get
  $$\pfj ^n \1 (w)\succeq \left( \frac{a}{A}\right)^{Nt}deg(f_\l)^{-N} \rho_{j,t}^n$$
  which is the required inequality.
\epf

We have not shown yet unicity of conformal measures. If $\tilde m_{j,t}$
are some other conformal measures and $\tilde \rho_{j,t}$ are the corresponding 
eigenvalues from \eqref{inv pf} then they are uniformly close to the
eigenvalues $\rho_{j,t}$ of $m_{j,t}$ in the following sense.

\blem\label{lemm cm 2}
For every $t\geq 0$, there exist a constant $B_t\geq 1$
such that for all $j\geq 0$ and $n\geq 1$ we have
$$\frac{1}{B_t} \leq \frac{\tilde \rho_{j,t}^n}{\rho_{j,t}^n}\leq B_t \;.$$
\elem

\bpf
With the above notations we get from Lemma \ref{lemm cm 1} that
$$m_{j,t}(D(z,r))\asymp r^t \rho _{j,t}^{-n} \; \text{ and }\;
\tilde m_{j,t}(D(z,r))\asymp r^t \tilde\rho _{j,t}^{-n} $$
for every $z\in \J_{\shift^j (\l)}$ and $r=r(z,n)=|(f_{\shift^j(\l)}^n)'(z)|^{-1}$.
Fix $n\geq 1$. Taking a Besicovitch covering of $\J_{\shift^j (\l)}$ by disks $D_k=D(z_k, r(z_k,n))$
centered on $\J_{\shift^j (\l)}$ we get that
$$1\asymp \sum_k m_{j,t}(D_{k}) \asymp \sum_k \rho_{j,t}^{-n}\frac{\tilde m_{j,t}(D_{k}) }{\tilde\rho_{j,t}^{-n}}
=\frac{\tilde\rho_{j,t}^{n}}{\rho_{j,t}^{n}}\sum_k \tilde m_{j,t}(D_{r,k})\asymp \frac{\tilde\rho_{j,t}^{n}}{\rho_{j,t}^{n}}$$
for all $j\geq 0$ and $n\geq 1$.

\epf

\subsection{Pressure}
 To every $\l\in \set^{hyp}$ and $t\geq 0$ we associate the lower and upper topological pressure
\beq \label{def pressure'}
\lpt = \liminf _{n\to\infty} \frac{1}{n} \log \rho _{\l,t}^n
\leq  \limsup _{n\to\infty} \frac{1}{n} \log \rho _{\l,t}^n=\upt\,
\eeq
where we used the already introduced notation $\rho _{\l,t}^n=\rho _{\l,t} \rho _{\shift(\l),t}...\rho _{\shift^{n-1}(\l),t}$.
Notice that these definitions do not dependent on the choice of conformal measures because of
Lemma \ref{lemm cm 2}. 

Since we have good estimations (Lemma \ref{lemm cm 3}) for the iterated operator $\pfl^n$ 
we also have the following expression for the pressures.
\beq \label{def pressure}
\lpt = \liminf _{n\to\infty} \frac{1}{n} \log \pfl^n\1 (w_n)
\leq  \limsup _{n\to\infty} \frac{1}{n} \log \pfl^n\1 (w_n)=\upt
\eeq
for any arbitrary choice of points $w_n\in \cJ_{\shift^n(\l)}$.

The pressures, seen as functions of $t$, have the following properties.

\bprop\label{prop pressures}
$\lp (0) = \up(0) = \log (deg(f_{\l}))$ and both pressures are continuous and strictly decreasing. More
precisely, if $0\leq t_1<t_2$, then
\beq \label{pente pressure} -(t_2-t_1)\log A \leq \lp (t_2)-\lp (t_1)\leq -(t_2-t_1) \log \g \eeq
and the same relation is true for the upper pressure $\up$.
\eprop

\bpf
The statement about the evaluation of the pressures at zero is clear.
For the remaining part, in fact the proof of  \eqref{pente pressure}, 
we consider $t\mapsto \lp(t)$, the case of the upper pressure function is analogous.

Let $0\leq t_1<t_2$ and set $p_i=\lp (t_i)$, $i=1,2$. If $m_{\l, t_i}$ is a $t_i$--conformal measure
then Lemma \ref{lemm cm 1} yields that for every $z\in \J_\l$ and $n\geq 1$
$$m_{\l, t_i} (D(z,r))\asymp r^{t_i} \rho_{\l,t_i}^{-n} \; \text{ where } r=|(f_{\l}^n)'(z)|^{-1}\,.$$
The expanding property implies $r\preceq \gamma^{-n}$. 
Therefore, 
$$m_{\l, t_2} (D(z,r))\asymp r^{t_2-t_1}\frac{\rho_{\l,t_1}^{n}}{\rho_{\l,t_2}^{n}}m_{\l, t_1} (D(z,r))
\preceq \gamma^{-(t_2-t_1)n}\frac{\rho_{\l,t_1}^{n}}{\rho_{\l,t_2}^{n}}m_{\l, t_1} (D(z,r))$$
Choose now a sequence $n_j\to \infty$ such that $\frac{1}{n_j}\log \rho_{\l,t_1}^{n_j} \to \lp (t_1)=p_1$.
Then, for every $\ep>0$,
$$\rho_{\l,t_1}^{n_j}\leq e^{n_j(p_1+\ep )} \;\; \text{ and } \;\;\rho_{\l,t_2}^{n_j}\geq e^{n_j(p_2-\ep )}$$
provided $j$ is sufficiently large.
For such $j$ and with $r_j= |(f_{\l}^{n_j})'(z)|^{-1}$ we get
$$\frac{m_{\l, t_2} (D(z,r_j))}{m_{\l, t_1} (D(z,r_j))}\leq \exp \Big\{
n_j\big( p_1-(t_2-t_1) \log \gamma -p_2 +2\ep \big)\Big\}\;.$$
If $p_2>p_1 -(t_2-t_1)\log \ga $ then there is $\ep>0$ sufficiently small such that for some sequence
$r_j\to 0$ we get $\lim_{j\to\infty} \frac{m_{\l, t_2} (D(z,r_j))}{m_{\l, t_1} (D(z,r_j))}=0$.
This holds for every $z\in \J_\l$. Therefore it would follow from Besicovitch's covering theorem
that $m_{t_2} (\J_\l )=0$, a contradiction. Therefore, $p_2\leq p_1 -(t_2-t_1)\log \ga $.

The second inequality can be proven in the same way replacing the estimation $r\preceq \gamma^{-n}$ by
$$r=|(f_{\l}^n)'(z)|^{-1} \geq A^{-n}\;.$$
\epf

\subsection{Dimensions} 
Given the properties of the pressure functions in Proposition \ref{prop pressures},
there are uniquely defined zeros $\underline h_\l $ and $\overline h_\l$ of 
$\lp$ and $\up$ respectively. With these numbers we get the following formula of Bowen's type.

\bthm\label{bowen}
$\underline h_\l= HD (\J_\l )$ and $\;\overline h_\l =PD (\J_\l )$.
\ethm

\bpf
Given Lemma \ref{lemm cm 1} and the properties of the pressure functions (Proposition \ref{prop pressures})
the proof of the theorem is by now standard. A good reference is \cite{PrzUrbXX}.
\epf

\section{Irregularity of pressure and dimensions}
\label{sec:QF}

Considering a particular family of quadratic polynomials greater in detail,
we now establish that the H\"older-continuity of dimensions obtained in Theorem \ref{cor holder}
is almost best possible, i.e. we prove Theorem \ref{thm irregularity}.
The key point is to show non-differentiability of the pressure functions.
As a byproduct we get that generically there is a gap between the Hausdorff
and the packing dimension as described in Theorem \ref{gap dimension}.
We recall that these results concern the family of functions 
\beq\label{quad family}
\cF = \Big\{ f_l (z)= l/2(z^2-1)+1\; , \;\; l\in \set _0\Big\} \;\; \text{ where }\;\; \set _0 =\{ |l|> 40\}\,.
\eeq
Note that for $f_l\in \cF$ we have $f'_l(z)=lz$. The inverse branches of $f_l$ have the form
\begin{displaymath}
  f^{-1}_l(w)=\pm\sqrt{1+\frac{2(w-1)}{l}}. 
\end{displaymath}
Let
\begin{displaymath}
  U_0=\{z\in\C:|z-1|<1/3\}\textrm{ and }U_1=\{z\in\C:|z+1|<1/3\}
\end{displaymath}
and denote $U:=U_0\cup U_1$.
A simple calculation shows that
$  f_l(U_i)\supset \D(0,2)$ 
and that moreover
$f_\l^{-1}(\overline{U})\subset U$ for every $ i=0,1$ and $\l\in \set=\set_0^\N$.
Consequently, the Julia set
$\J_\l$ is a Cantor set 
\begin{equation}
  \label{eq:11}
  \cJ_\lambda=\bigcap_{n=0}^\infty f_\lambda^{-n}(U)\subset U
\end{equation}
and all critical orbits of $(f^n_\l)_n$, $l\in \set_0^\N$, do not intersect the set $U$.
This last property means that  every $\l\in l^\infty(\set_0)$ gives rise to a uniformly hyperbolic map
and that, in particular, $\l$ is a stable parameter.
Let, in the following, $\set = l^\infty(\set_0)$. We have $\set= \set ^{uHyp}= \set ^{stable}$.

Let $\basept\in \set$ and let $\{ h_{\shift^n (\l)}\}_n$ be a family of holomorphic motions
over $V$ neighborhood of $\basept$
such that \eqref{diag 1} holds. We first investigate the speed of these motions.

\begin{lem}
  \label{lem:2}
  Let $\basept\in \set$ and let $V_\basept$ and $\{ h_{\shift^n (\l)}\}_n$ be as above.
  Then, with $\Delta= \sup_{k\geq 1}\frac{|\l_k-\basept_k|}{|\basept_k|}$,  
  \begin{displaymath}
   e^{-\Delta /6}\leq \frac{|h_{\shift^n (\lambda)}(z)|}{|z|}\leq e^{\Delta /6}\quad 
   \text{for every }\;   z\in\J_{\shift^n(\basept)}
    \text{ and } \; n\geq 0\,.
  \end{displaymath}
\end{lem}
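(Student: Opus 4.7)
The plan is to track, in tandem, the orbit of $z$ and that of its holomorphic-motion image, and set up a contracting recursion in the log-ratio. For $z \in \cJ_{\sg^n(\basept)}$ set
$z^{(k)} := f^k_{\sg^n(\basept)}(z) \in \cJ_{\sg^{n+k}(\basept)}$ and $w^{(k)} := h_{\sg^{n+k}(\l)}(z^{(k)}) \in \cJ_{\sg^{n+k}(\l)}$.
The commuting diagram of Definition \ref{h stability} forces $w^{(k+1)} = f_{\l_{n+k+1}}(w^{(k)})$, so that both sequences live inside $U_0\cup U_1$. By continuity of holomorphic motions (shrinking $V_\basept$ if necessary), $w^{(k)}$ and $z^{(k)}$ stay in the same component of $U$, hence $w^{(k)}/z^{(k)}$ remains close to $1$ and $L^{(k)} := \log(w^{(k)}/z^{(k)})$ is well defined via the principal branch. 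Note that $|\Re L^{(0)}| = \log(|w^{(0)}|/|z^{(0)}|)$ is exactly the quantity we must bound.

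The algebraic core is the inverse-branch identity obtained from $f_l(z) - 1 = (l/2)(z^2-1)$, namely
$$(w^{(k)})^2 = 1 + \frac{2(w^{(k+1)}-1)}{\l_{n+k+1}}, \qquad (z^{(k)})^2 = 1 + \frac{2(z^{(k+1)}-1)}{\basept_{n+k+1}}.$$
Writing $A^{(k)} := 2(w^{(k+1)}-1)/\l_{n+k+1}$ and $B^{(k)} := 2(z^{(k+1)}-1)/\basept_{n+k+1}$, we obtain $2L^{(k)} = \log\big((1+A^{(k)})/(1+B^{(k)})\big)$. Since every point of $U_0\cup U_1$ satisfies $|w-1|\leq 7/3$, and since $|\basept_k|, |\l_k| > 40(1-\Delta)$, both $|A^{(k)}|$ and $|B^{(k)}|$ are bounded by $7/60$, so the elementary estimate $|\log(1+A)-\log(1+B)|\leq |A-B|/(1-\max(|A|,|B|))$ applies. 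Splitting
$$A^{(k)} - B^{(k)} = \frac{2(w^{(k+1)}-z^{(k+1)})}{\l_{n+k+1}} + 2(z^{(k+1)}-1)\,\frac{\basept_{n+k+1}-\l_{n+k+1}}{\l_{n+k+1}\basept_{n+k+1}}$$
and bounding $|w^{(k+1)}-z^{(k+1)}| = |z^{(k+1)}|\cdot|e^{L^{(k+1)}}-1|$ by a constant times $|L^{(k+1)}|$ (using the a priori bound $|L^{(k+1)}|\leq C$ coming from both points lying in a disk of radius $1/3$), together with $|\l_{k}-\basept_k|\leq\Delta\,|\basept_k|$, produces a recursive estimate of the form
$$|L^{(k)}| \leq \alpha\, |L^{(k+1)}| + \beta\, \Delta.$$
The strong contraction $\alpha < 1$ is ensured by the hypothesis $|\basept_k| > 40$, which forces $|\l_{n+k+1}|$ in the denominators to be large.

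Iterating this inequality from $k=0$ up to an arbitrary $K$ yields $|L^{(0)}| \leq \beta\Delta(1+\alpha+\cdots+\alpha^{K-1}) + \alpha^K|L^{(K)}|$; sending $K\to\infty$ and using the uniform a priori bound on $|L^{(K)}|$ gives $|L^{(0)}| \leq \beta\Delta/(1-\alpha)$. The conclusion then follows from $|\Re L^{(0)}|\leq |L^{(0)}|$, provided the constants are tracked carefully enough that $\beta/(1-\alpha) \leq 1/6$. The main obstacle is exactly this bookkeeping: conceptually the argument is transparent, but the target exponent $\Delta/6$ requires that the numerical bounds $|\basept_k|>40$, $|w-1|\leq 7/3$, $|A|, |B|\leq 7/60$ be combined tightly enough to squeeze the geometric sum into the factor $1/6$. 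The lower bound $|\basept_k|>40$ built into the family $\cF$ is precisely the quantitative ingredient that makes this work.
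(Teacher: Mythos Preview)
Your argument is sound and essentially equivalent to the paper's, but the packaging differs in two visible ways.

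First, the paper works with the \emph{additive} discrepancy $|z-h_\l(z)|$ rather than with the log--ratio $L^{(k)}$. It reduces the conclusion to the target $|z-h_\l(z)|\le \Delta/9$ (which, since $|z|\ge 2/3$ on $U$, implies the stated multiplicative bound), and proves this target inductively on inverse branches: if $|w_1-w_2|\le \Delta/9$ then $|f_{\l_k}^{-1}(w_1)-f_{\basept_k}^{-1}(w_2)|\le \Delta/9$, using the splitting $|f_{\l_k}^{-1}(w_1)-f_{\basept_k}^{-1}(w_1)|+|f_{\basept_k}^{-1}(w_1)-f_{\basept_k}^{-1}(w_2)|$ and the bound $|\partial_l f_l^{-1}(w)|\le 3/|l|^2$. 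Second, the paper anchors the induction at a \emph{finite} time: it uses that the holomorphic motion on $\cE_\basept$ is given explicitly by $h_\l(z)=f_\l^{-n}(\al_i^n)$ with the \emph{same} endpoint $\al_i^n$ for both parameters, so the discrepancy starts at $0$ and one only iterates $n$ backward steps. You instead iterate forward to infinity and kill the tail $\alpha^K|L^{(K)}|$ via a uniform a priori bound. Both routes exploit the same contraction of inverse branches; the paper's avoids any limit and any appeal to continuity to keep $w^{(k)}$ and $z^{(k)}$ in the same component of $U$.

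The one genuine gap in your write--up is the numerical bookkeeping you explicitly defer. For the record, it does close: for $w,z$ in the same component of $U$ one has $|L^{(k)}|\le 1$ a priori, so $|e^{L^{(k+1)}}-1|\le e\,|L^{(k+1)}|$; together with $|z^{(k+1)}|\le 4/3$, $|z^{(k+1)}-1|\le 7/3$, $|\l_j|>40$ and $|A^{(k)}|,|B^{(k)}|\le 7/60$ this yields $\alpha\le 2e/53<0.11$ and $\beta\le 7/106$, hence $\beta/(1-\alpha)<1/6$. You should include this verification rather than leave it as an assertion, since the exponent $\Delta/6$ is exactly what is used downstream in Proposition~\ref{lem:3}.
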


\begin{proof}
We give a proof for the case $n=0$, the general case follows exactly in the same way.
  Since $z,h_{\lambda}(z)\in U$, a simple calculation shows that it is sufficient to establish 
  \beq\label{eq:17}
  \big|z-h_{\lambda}(z)\big|\leq \frac{\Delta}{9}\;\; \text{ for every }\; z\in \J_\basept \,.
  \eeq

For the sake of proving this inequality we recall that the holomorphic motions
are first constructed on the set $\cE_\basept$ defined in \eqref{set e} and that the Julia set 
$\J_\basept$ is in the closure of $\cE_\basept$. Consequently, it suffices to establish \eqref{eq:17}
for all points $z\in \cE_\basept$. 

For points $z\in \cE_\basept$ the holomorphic motion $h_\l$ is given by
\beq \label{eq implicit ft} h_\l (z)=f_\l^{-n} (\al_i^n) \eeq
for some $i\in \{1,2,3\}$
and $n\geq 0$ and where $f_\l^{-n}$ is a certain inverse branch of $f_\l^{n}$ which has been
determined by the implicit function theorem in \eqref{impl ft}. Therefore, we now consider
in detail the behavior of these inverse branches under variation of the parameter $\l$.

  Fix $i\in\{0,1\}$ and $k\geq 1$ and consider inverse branches
  $f^{-1}_{\lambda_k}$, $f^{-1}_{\basept_k}$ both sending the euclidean disk $\D(0,2)$ into
  $U_i$. Our first step is to show that for every
 $ w_1,w_2\in U_i $ with $ |w_1-w_2|\leq \frac{\Delta}{9}$
  we have
  \begin{equation}
    \label{eq:10}
    |f^{-1}_{\lambda_k}(w_1)-f^{-1}_{\basept_k}(w_2)|\leq \frac{\Delta}{9}\, .
  \end{equation}
  Since
  \begin{equation}
    \label{eq:18}
    |f^{-1}_{\lambda_k}(w_1)-f^{-1}_{\basept_k}(w_2)|\leq
    |f^{-1}_{\lambda_k}(w_1)-f^{-1}_{\basept_k}(w_1)|+
    |f^{-1}_{\basept_k}(w_1)-f^{-1}_{\basept_k}(w_2)|
  \end{equation}
  it suffices to estimate separately these two terms.
  Concerning the first one, observe that
  \begin{equation}
    \Big|\frac{d f^{-1}_l(w)}{dl}\Big|=
    \frac{|w-1|}{\big|\sqrt{1+\frac{2(w-1)}{l}}\big|}\frac{1}{|l^2|}\leq \frac{3}{|l|^2}
  \end{equation}
  for all $w\in U$ and $|l| \geq 40$. It follows that for $|\l_k-\basept_k|<1$, $\l_k, \basept_k\in \set_0$,
  \begin{displaymath}
    |f^{-1}_{\lambda_k}(w_1)-f^{-1}_{\basept_k}(w_1)|\leq
    \frac{3}{(|\basept_k|-1)^2}|\lambda_k-\basept_k|\leq
    \frac{3}{(|\basept_k|-1)}\frac{40}{39}\Delta\,.
  \end{displaymath}
  Concerning the second term, we have that 
  \begin{displaymath}
    |f^{-1}_{\basept_k}(w_1)-f^{-1}_{\basept_k}(w_2)|\leq
    \frac{|w_1-w_2|}{\sqrt{5/6}(|\basept_k|-1)}\leq \frac{1}{(|\basept_k|-1)} \Delta \,.
  \end{displaymath}
 Adding both estimations and using again that $|\basept_k|-1 \geq 39$
  we obtain (\ref{eq:10}).

It suffices now to proceed by induction and to get, with the notation of \eqref{eq implicit ft},
that
  \begin{displaymath}
 |h_\l(z)-z|=   |f^{-n}_\lambda(\al_i^n)-f^{-n}_{\basept}(\al_i^n)|\leq \frac{\Delta}{9}\, .
  \end{displaymath}
\end{proof}

Having analyzed the speed of holomorphic motions we now use this tool
in order to study the variation of the lower and upper pressure $\lpt$, $ \upt$
defined in \eqref{def pressure}. In order to do so, fix $\basept\in \set$.
We will choose later on for every $t>0$ an element $(s_0,s_1\ldots)\in \{-1,1\}^\N$
 and consider, for 
$x\in (-r,r)$, the parameter $\l(x)=(\l_1(x),\l_2(x),...)$ defined by
$$\l_k(x)= e^{xs_k}\basept_k \; , \quad  k\geq 1\;.$$
Since $\eta\in\set =l^\infty (\set_0 )$, there is a
 number $r\in (0,1]$ such that $\l(x)\in \set$ for all $x \in (-r,r)$. 
 Moreover, the map $x\mapsto \l (x)$ is differentiable from $(-r, r)$ into $\set$.
  Clearly, $\l(0)=\basept$.
Hence, for every $t>0$,
we consider a particular choice of perturbation of $f_\basept\in \cF$.

\begin{prop}
  \label{lem:3}
  For every $t>0$ there is a choice of numbers $s_j=s_j(t)\in \{-1,1\}$ such that, 
  with the preceding notation, we have for every $x\in (-r,r)$
  \begin{equation}
    \label{eq:19}
    \overline{P}_{\l(x)}(t)\geq  \overline{P}_{\basept}(t)+\frac{t}{2}|x|
  \end{equation}
  and
  \begin{equation}
    \label{eq:20}
    \underline{P}_{\l(x)}(t)\leq\underline{P}_\basept (t)-\frac{t}{2}|x|.
  \end{equation}
  In particular, the functions $\l \mapsto \lpt$ and $\l \mapsto \upt$
  are not differentiable at any point $\basept\in \set$.
\end{prop}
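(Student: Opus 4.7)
The plan is to exploit the explicit form $f_l'(z)=lz$ of the quadratic family together with the speed estimate of Lemma~\ref{lem:2} in order to compute, up to a controlled error, the ratio of derivatives along holomorphically conjugate orbits, and then to translate this estimate into a comparison of transfer operators, which via \eqref{def pressure} controls the pressures.

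For $z\in\J_{\sg^j(\basept)}$ set $z_j=f_\basept^j(z)$ and $\tilde z_j=h_{\sg^j(\l(x))}(z_j)\in\J_{\sg^j(\l(x))}$. Lemma~\ref{lem:2} gives $|\tilde z_j|/|z_j|=e^{\e_j}$ with $|\e_j|\leq\Delta/6\leq|x|/3$ for $|x|$ small. Combined with the spherical/Euclidean correction (which telescopes into an absolute bounded factor since orbits stay in the compact set $U$), the chain rule then yields
\[
\frac{|(f_{\l(x)}^n)'(\tilde z_0)|}{|(f_\basept^n)'(z_0)|} = \exp\!\Big( x\sum_{j=0}^{n-1} s_j + R_n(x) \Big) \quad\text{with}\quad |R_n(x)|\leq \tfrac{n|x|}{3}+O(1).
\]
Because the holomorphic motions conjugate $f_\basept^n$ to $f_{\l(x)}^n$, they put the preimages of any $w\in\J_{\sg^n(\basept)}$ in bijection with the preimages of $\tilde w:=h_{\sg^n(\l(x))}(w)$; summing the previous relation raised to the $-t$ power over these preimages yields
\[
\tfrac{1}{n}\log \cL_{\l(x),t}^n\1(\tilde w) = \tfrac{1}{n}\log \cL_{\basept,t}^n\1(w) - \tfrac{tx}{n}\sum_{j=0}^{n-1} s_j + O(|x|).
\]

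To choose the signs, use \eqref{def pressure} together with Lemma~\ref{lemm cm 3} to extract four increasing, interlaced subsequences $n_k^{(i)}$ ($i=1,\ldots,4$) with associated points $w_k^{(i)}\in\J_{\sg^{n_k^{(i)}}(\basept)}$ along which $\tfrac{1}{n_k^{(i)}}\log\cL_{\basept,t}^{n_k^{(i)}}\1(w_k^{(i)})$ tends to $\overline{P}_\basept(t)$ for $i=1,2$ and to $\underline{P}_\basept(t)$ for $i=3,4$. Construct $(s_j)\in\{-1,+1\}^{\N}$ block by block so that $\tfrac{1}{n_k^{(i)}}\sum_{j<n_k^{(i)}}s_j \to -1,+1,+1,-1$ respectively; this is possible because the partial sums $S_n$ move by $\pm 1$ at each step, so that if the $n_k^{(i)}$ grow fast enough (e.g.\ geometrically) there is room for $S_n$ to swing by almost $\pm n$ between consecutive indices. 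For $x>0$, applying the second display along $n_k^{(1)}$ (where $\sum s_j/n\to -1$) gives $\overline{P}_{\l(x)}(t)\geq \overline{P}_\basept(t)+tx-\tfrac{t|x|}{3}\geq \overline{P}_\basept(t)+\tfrac{t}{2}|x|$; for $x<0$ use $n_k^{(2)}$. The analogous argument with $n_k^{(3)},n_k^{(4)}$ and $\liminf$ in place of $\limsup$ proves the inequality for $\underline{P}_{\l(x)}(t)$. Non-differentiability at $\basept$ follows since $x\mapsto \overline{P}_{\l(x)}(t)$ has a right-derivative at $0$ of at least $t/2$ and a left-derivative of at most $-t/2$, and likewise for $\underline{P}$.

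The main obstacle is that the per-step noise of size $|x|/3$ coming from Lemma~\ref{lem:2} is of the same order as the per-step signal $|x|$: one cannot simply average the noise away, so the signs $(s_j)$ must be \emph{strongly} biased (with $|S_n/n|$ arbitrarily close to $1$) along the designated pressure-realizing subsequences. This forces the combinatorial construction of $(s_j)$ to be tightly coupled to the spectral subsequences coming from the definition of the pressures.
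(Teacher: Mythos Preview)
Your argument is correct and follows essentially the same route as the paper: compute the ratio of derivatives along conjugate orbits via $f_l'(z)=lz$ and Lemma~\ref{lem:2}, transfer this to a comparison of $\cL^n\1$, and then choose the signs $s_j$ so that the averages $\tfrac1n\sum s_j$ swing to $\pm 1$ along suitable pressure--realizing subsequences.

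Two remarks on the comparison:
\begin{itemize}
\item You are actually \emph{more careful} than the paper on one point. The paper fixes a single subsequence $(n_j)$ realizing $\overline P_\basept(t)$, chooses $(s_k)$ so that $\tfrac1{n_j}\sum_{k\le n_j}s_k$ has $\liminf=-1$ and $\limsup=+1$, and then says the lower--pressure inequality ``follows in the same way''. Strictly speaking the same sequence $(s_k)$ must also do the job along a subsequence realizing $\underline P_\basept(t)$, and the paper does not arrange this explicitly. Your interlaced four--subsequence construction does, so it is a cleaner justification of the \emph{single} choice of $(s_j)$ demanded by the statement.
\item A small imprecision: you write $\Delta/6\le |x|/3$ ``for $|x|$ small'', but the proposition asks for all $x\in(-r,r)$. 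In fact $|e^{\pm x}-1|\le 2|x|$ for $|x|\le 1$, so $\Delta\le 2|x|$ and your bound holds on the whole interval (the paper uses the cruder $\Delta\le e|x|$ and replaces $e/6$ by $1/2$). Likewise your mention of the spherical/Euclidean correction telescoping to $O(1)$ is a detail the paper suppresses; it is correct since all orbits stay in the compact set $U$.
\end{itemize}
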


\begin{proof}
The particular choice of the functions in the family $\cF$ leads to the following 
expressions. First of all, for every $n\geq 1$,
$$\left( f^n_\basept \right)'(z)= \prod _{k=1}^n \basept_k f_\basept^{k-1}(z)\,.$$
Now, using again holomorphic stability and the notation $z_x=h_{\l(x)}(z)$, $z\in \J_\basept$,
we also have that
$$\left( f^n_{\l(x)} \right)'(z_x)= \prod _{k=1}^n \l_k(x)  f^{k-1}_{\l(x)}(z_x)
= \prod _{k=1}^n e^{xs_k}\basept_k h_{\shift^{k-1}(\l(x))} \circ f^{k-1}_\basept(z) \,.$$
If we now apply Lemma \ref{lem:2} then we get the estimation
$$\left|( f^n_{\l(x)} )'(z_x)\right|\leq 
\prod _{k=1}^n e^{xs_k}|\basept_k | e^{\Delta /6} | f^{k-1}_\basept(z)|
= e^{n\Delta /6}\left(\prod _{k=1}^n e^{xs_k} \right)\left|( f^n_\basept )'(z)\right|$$
and, similarly,
$$\left|( f^n_{\l(x)} )'(z_x)\right|\geq 
e^{-n\Delta /6}\left(\prod _{k=1}^n e^{xs_k} \right)\left|( f^n_\basept )'(z)\right|
\;\; \text{for every }\; z\in \J_\basept\,.$$
For the particular perturbation we have chosen we have
$$\Delta= \Delta (x) = \sup_{k\geq 1}\frac{|\l_k(x)-\basept_k|}{|\basept_k|}
= \sup_{k\geq 1} \big| e^{s_k x}-1\big|\leq e \sup_{k\geq 1}|s_k x| = e|x|\,.
$$
Replacing $\Delta$ by this estimation in the preceding inequalities leads to
$$e^{-tn|x|/2}\left(\prod _{k=1}^n e^{-txs_k} \right)\left|( f^n_\basept )'(z)\right|^{-t}
\leq \big|( f^n_{\l(x)} )'(z_x)\big|^{-t} \leq 
e^{tn|x| /2}\left(\prod _{k=1}^n e^{-txs_k} \right)\left|( f^n_\basept )'(z)\right|^{-t}
$$
for every $ z\in \J_\basept$ and $t>0$.  

The operators $\cL_{\l,t}$ have been defined in \eqref{def pf}. The previous inequality yields 
\beq \label{eq 11.1}
 e^{-tn|x| /2}\left(\prod _{k=1}^n e^{-txs_k} \right) \cL^n_{\basept ,t} \1 (w)
\leq  \cL^n_{\l(x),t} \1 (w_x)\leq e^{tn|x| /2}\left(\prod _{k=1}^n e^{-txs_k} \right) \cL^n_{\basept ,t} \1 (w)
 \eeq
for every $n\geq 0$, $w\in \J_{\shift^n (\basept)}$ and with $w_x=h_{\shift^n (\l(x)) } (w)$.
Avoiding long notation, we have just shown this inequality for the first fiber.
But it is clear that one can replace here the parameters $\basept $ and $ \l(x)$
by their images by $\shift^j$, $j\geq 1$, and one still has the corresponding estimation.

We can now study the behavior of the pressures. Let us recall that we have the
expression \eqref{def pressure} of $\lpt$ and of $\upt$ in terms of the iterated 
operators $\pfl^n\1$. Inequality \eqref{eq 11.1} implies that, for all $x\in (-r,r)$ and $t>0$,

\beq \label{eq 11.3}
-t \frac{|x|}{2} +
\frac{1}{n}\log \cL^n_{\l (x) ,t}\1 (w_x)
\leq \frac{1}{n}\log  \cL^n_{\basept ,t}\1 (w) - t\frac{x}{n}\sum_{k=1}^n s_k \leq
t \frac{|x|}{2} +\frac{1}{n}\log\cL^n_{\l (x) ,t}\1 (w_x)
\,.\eeq
For the conclusion of the proof let $t>0$ again be fixed. There is then a sequence
$n_j\to \infty$ such that $\overline{P}_{\basept}(t)= \lim_{j\to\infty}
\frac{1}{n_j}\log \cL^{n_j}_{\basept ,t}\1 (w_{n_j})$. Choose now the numbers $s_k=s_k(t)\in \{-1,1\}$
such that
$$\liminf_j \frac{1}{n_j}\sum_{k=1}^{n_j} s_k =-1\quad \text{and}
 \quad \limsup_j \frac{1}{n_j}\sum_{k=1}^{n_j} s_k=1\,.$$
 This choice makes that 
 $\limsup_{j} -t\frac{x}{n}\sum_{k=1}^{n_j} s_k = t|x|$.
It follows now from \eqref{eq 11.3} that $$\overline{P}_{\basept}(t)+\frac{t}{2}|x|\leq \overline{P}_{\l(x)}(t)$$
which is exactly  \eqref{eq:19}. Inequality \eqref{eq:20} follows in the same way and
they both together imply that the pressures are not differentiable at $\basept$.
\end{proof}

\begin{proof}[Proof of Theorem \ref{thm irregularity}]
We first consider Hausdorff dimension.
Let $\hbasept>0$ be the unique zero of $t\mapsto \underline P _\basept (t)$
and suppose that the $s_k\in\{-1,1\}$ in Proposition \ref{lem:3} are chosen for 
$t=\hbasept$. It follows then from \eqref{eq:20} in Proposition \ref{lem:3} that
$$ \underline P_{\l(x)}(\hbasept )  \leq  \underline P_\basept (\hbasept )-\frac{\hbasept}{2} |x| 
= -\frac{\hbasept}{2} |x| <0\,.$$
We look for $\underline h_x$ zero of $t\mapsto  \underline P_{\l(x)}(t )$ since,
by Theorem \ref{bowen}, this number equals the Hausdorff dimension of $\J_{\l(x)}$.
The pressures being stricly decreasing, $\underline h_x <\hbasept$.
Therefore, Proposition \ref{prop pressures} yields
$$0= \underline P_{\l(x)} (\underline h_x )\leq P_{\l(x)} (\hbasept ) +(\hbasept - \underline h_x)\log A
\leq - \frac{\hbasept}{2} |x|+(\hbasept - \underline h_x)\log A$$
from which follows that
\beq\label{eq 11.4}
\underline h_x\leq \hbasept \Big(1 -\frac{|x|}{2\log A} \Big)\,.
\eeq
Therefore, $x\mapsto \underline h_x= \HD(\J_{\l(x)} )$ is not differentiable.

Similarly to \eqref {eq 11.4} one obtains, with obvious notations,
\beq\label{eq 11.5}
\overline h_x\geq \overline h_\basept \Big(1 +\frac{|x|}{2\log \gamma} \Big)
\eeq
 and the non-differentiability of the Packing dimension follows.
\end{proof}

\begin{proof}[Proof of Theorem \ref{gap dimension}]
In any family $\cF$ the set $\Omega=\{\l\in \set \, , \; \HD(\J_\l ) < \PD (\J _\l )\}$
is open in $l^\infty(\set  )$ because of Theorem \ref{cor holder}.

Density of $\Omega$ for the particular quadratic family of this section
can be shown as follows. If $\basept\in \set \setminus \Omega$ then
it follows immediately from \eqref{eq 11.4} and \eqref{eq 11.5}
together with Bowen's formula (Theorem \ref{bowen}) that 
there are arbitrarily small perturbations of $\basept$ that are in $\Omega$.
\end{proof}

\bibliographystyle{alpha}  
\bibliography{biblio_VM}

\end{document}